\def \r{\mathbb R}
\def \s{\mbox{${\mathbb S}$}}
\def \h{\mathbb  H}
\DeclareMathOperator{\arctanh}{arctanh}
\DeclareMathOperator{\arcsinh}{arcsinh}
\DeclareMathOperator{\sech}{sech}
\DeclareMathOperator{\arccot}{arccot}
\theoremstyle{plain}
\newtheorem{theorem}                 {Theorem}      [section]
\newtheorem{proposition}  [theorem]  {Proposition}
\newtheorem{corollary}    [theorem]  {Corollary}
\newtheorem{lemma}        [theorem]  {Lemma}
\theoremstyle{definition}
\newtheorem{example}      [theorem]  {Example}
\newtheorem{remark}       [theorem]  {Remark}
\begin{document}

\subjclass[2000]{53A99, 53C22, 53C42, 14M17}

\title{Loxodromes on invariant surface in three-manifolds}

\author{Renzo Caddeo}
\address{Universit\`a degli Studi di Cagliari\\
Dipartimento di Matematica e Informatica\\
Via Ospedale 72\\
09124 Cagliari, Italia}

\email{caddeo@unica.it}

\author{Irene I. Onnis}

\address{Departamento de Matem\'{a}tica, C.P. 668\\ ICMC,
USP, 13560-970, S\~{a}o Carlos, SP\\ Brasil}

\email{onnis@icmc.usp.br}

\author{Paola Piu}
\address{Universit\`a degli Studi di Cagliari\\
Dipartimento di Matematica e Informatica\\
Via Ospedale 72\\
09124 Cagliari, Italia}

\email{piu@unica.it}

\keywords{Loxodromes, Invariant surfaces, Heisenberg group, Homogeneous spaces, BCV-spaces.}
\thanks{The second author was supported by grant 2016/24707-4, S\~ao Paulo Research Foundation (Fapesp) and by CNPq productivity  grant 312700/2017-2. The last author was supported by PRIN 2015 ``Variet\`a reali e complesse: geometria, topologia e analisi armonica''  Italy; and GNSAGA-INdAM, Italy.}

\begin{abstract}
In this paper, we prove important results  concerning the loxodromes on an invariant surface in a three-dimensional Riemannian manifold, some of which generalize classical results about loxodromes on rotational surfaces in $\r^3$. 
In particular, we show how to parametrize a loxodrome on an invariant surface of  $\h^2\times \r$ and $\h_3$ and we exhibit the loxodromes of some remarkable minimal invariant surfaces of these spaces.  In addition, we give an explicit description of the loxodromes on an invariant surface with
constant Gauss curvature.
\end{abstract}

\maketitle

\section{Introduction and preliminaries}

In this paper, we study the loxodromes on an important family of surfaces in a three-manifold, that of the  surfaces which are invariant under the action of a one-parameter group of isometries of the ambient space. Invariant surfaces have been classified by
Gaussian or mean curvature in many remarkable three-dimensional spaces (see, for example, \cite{RCPPAR1,RCPPAR2,FIMEPE,LO,MO1,MO,Onnis,TO}).  Also, in \cite{MO3, MO2,PiuProfir} have been studied two well known types of curves on invariant surfaces: the geodesics and the proper biharmonic curves.
\\

A rotational surface of the Euclidean three-space is $SO(2)$-invariant and a loxodrome is a curve on it which meets the meridians  at a constant angle. In \cite{N}, C.A. Noble obtained the differential equations of a loxodrome on these surfaces and, in particular, he investigated these curves on spheres and spheroids.
Recently, the results of \cite{N} have been generalized by Babaarslan and Yayli to the case of helicoidal surfaces in $\r^3$ (see \cite{Ba}).\\

As the meridians and the parallels of a rotational surface of $\r^3$ are orthogonal, the loxodromes may also be defined as curves that make a constant angle with the Killing vector field $X$ that is the infinitesimal generator of the one-parameter subgroup of isometries given by $SO(2)$. 
Therefore, if we denote by $G_X$  the one-parameter subgroup of isometries of the ambient space generated by the Killing vector field $X$, the loxodromes on a $G_X$-invariant surface  can naturally defined as the curves which make a constant angle with $X$.\\

In order to investigate loxodromes on an invariant surface in a three-dimensional manifold, we need to recall some basic fact on the geometry of invariant surfaces.
\subsection{Equivariant geometry of invariant surfaces}
Let
$({N}^3,g)$ be a three-dimensional Riemannian manifold and let $X$
be a Killing vector field on ${N}$. Then $X$ generates a
one-parameter subgroup $G_X$ of the group of isometries of
$({N}^3,g)$. Let now $f:{M}\to ({N}^3,g)$ be an immersion from a
surface ${M}$ into ${N}^3$ and assume that $f({M})\subset {N}_r$
(the regular part of $N$,  that is, the subset consisting of
points belonging to principal orbits). We say that $f$ is a $G_X$-{\it
equivariant immersion}, and $f({M})$ a $G_X$-{\it invariant surface}
of ${N}$, if there exists an action of $G_X$ on ${M}$ such that
for any $x\in {M}$ and $a\in G_X$ we have $f(a\,x)=a f(x)$.
A $G_X$-equivariant immersion $f:{M}\to ({N}^3,g)$ induces on ${M}$ a Rieman\-nian metric,
the pull-back metric, denoted by $g_f$ and called the $G_X$-{\it invariant induced metric}.\\

Let $f:{M}\to ({N}^3,g)$ be a $G_X$-equivariant immersion and let $g_f$ be the
 $G_X$-invariant induced metric on $M$.
Assume 
%that $f({M}^2)\subset{N}_r$ and 
that ${N}/G_X$ is connected. Then, $f$ induces an
immersion $\tilde{f}:{M}/G_X\rightarrow {N}_r/G_X$ between the orbit
spaces and, moreover, ${N}_r/G_X$ can be equipped with a
Riemannian metric, the {\it quotient metric}, so that the quotient
map $\pi:{N}_r\to {N}_r/G_X$ becomes  a Riemannian submersion.
Thus we have the following diagram
$$
\begin{CD}
({M},g_f) @>f>> ({N}^3_{r},g)\\
@V VV @V\pi VV\\
{M}/G_X @>\tilde{f}>> ({N}^3_{r}/G_X,\tilde{g}).
\end{CD}
$$

For later use, we describe the quotient metric of the regular part of the orbit space
$N_r/G_X$. It is well known (see, for example, \cite{Olver})  that
$N_r/G_X$ can be locally parametrized by the invariant functions of
the Killing vector field $X$.  If $\{\xi_1,\xi_2\}$ is a complete set of
invariant functions on a $G_X$-invariant subset  of $N_r$, then the
quotient metric is given by  $\tilde{g}=~\sum_{i,j=1}^{2} h^{ij}
d\xi_i\otimes d\xi_j$ where $(h^{ij})$ is the inverse of the matrix
$(h_{ij})$ with entries $h_{ij}=g(\nabla \xi_i,\nabla \xi_j)$.\\

Now, we can give a local description of the $G_X$-invariant surfaces of
${N}^3$. Let $\tilde{\gamma}:(a,b)\subset\r\to({N}^3/G_X,\tilde{g})$
be a curve parametrized by arc length and let
$\gamma:(a,b)\subset\r\to {N}^3$ be a lift of $\tilde{\gamma}$, such
that $d\pi(\gamma')=\tilde{\gamma}'$. If we denote by
$\phi_v,\;v\in(-\epsilon,\epsilon)$, the local flow of the Killing
vector field $X$, then the map
\begin{equation}\label{eq-psi}
\psi:(a,b)\times(-\epsilon,\epsilon)\to {N}^3\,,\quad \psi(u,v)=\phi_v(\gamma(u)),
\end{equation}
defines a parametrization of a
$G_X$-invariant surface. Conversely, if $f({M})$ is a $G_X$-invariant immersed surface in ${N}^3$,
then $\tilde{f}$ defines a curve in $({N}^3/G_X,\tilde{g})$ that can be locally
parametrized by arc length. The curve $\tilde{\gamma}$ is generally called the {\it profile 
curve} of the invariant surface.\\

Observe that, as the $v$-coordinate curves are the orbits of the
action of the one-parameter group of isometries $G_X$, the
coefficients of the pull-back metric $g_f=E\, du^2 +2 F\, du dv + G\,
dv^2$ are function only of $u$ and are given by:
$$
\begin{cases}
E=g(\psi_u,\psi_u)=g(d\phi_v(\gamma'),d\phi_v(\gamma'))\\
F=g(\psi_u,\psi_v)=g(d\phi_v(\gamma'),X)\\
G=g(\psi_v,\psi_v)=g(X,X).\\
\end{cases}
$$
Putting  $(\omega(u))^2:=\|X(\gamma(u))\|^2_g=G$, one has (see~\cite{MO})
\begin{equation}
\label{eq-E} E\,G-F^2=G=(\omega(u))^2.
\end{equation}
\begin{remark}
When $\gamma$ is a
horizontal lift of $\tilde{\gamma}$, we have that $F=0$
and $E=1$. Thus, the equation~\eqref{eq-E} is immediate.
\end{remark}
By using \eqref{eq-E} and Bianchi's formula for the Gauss curvature,
we find that
\begin{equation}\label{eq-K}
K(u)=-\frac{\omega_{uu}(u)}{ \omega(u)}.
\end{equation}
Consequently, we have
\begin{theorem}[\cite{MO}]\label{teo-main}
Let $f:M\to(N^3,g)$ be a $G_X$-equivariant immersion,
$\tilde{\gamma}:(a,b)\subset\r\to(N_r^3/G_X,\tilde{g})$
 a parametrization by arc length of the profile curve of $M$ and $\gamma$ a lift of $\tilde{\gamma}$.
Then the induced metric $g_f$ is of constant Gauss curvature $K$ if
and only if the function $\omega(u)$
 satisfies the  differential equation
\begin{equation}\label{eq-main}
\omega_{uu}(u) + K \omega(u) =0.
\end{equation}

\end{theorem}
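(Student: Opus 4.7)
The plan is to observe that this theorem is essentially a direct consequence of equation~\eqref{eq-K}, which has already been established in the preceding discussion via Bianchi's formula together with~\eqref{eq-E}. So the main work is conceptual: identifying that \eqref{eq-K} already expresses the Gauss curvature pointwise in terms of $\omega$, and then reading the equivalence off.

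More concretely, I would first recall that, since $\gamma$ parametrizes the profile curve lifted so that $\tilde\gamma$ is arc-length, and since $X$ is Killing, the coefficients $E, F, G$ of $g_f$ depend only on $u$, with the crucial identity $EG - F^2 = G = \omega(u)^2$ from~\eqref{eq-E}. Applying Bianchi's formula for the Gauss curvature of a two-dimensional metric whose coefficients depend on one variable (which is what produces~\eqref{eq-K}), one obtains
\begin{equation*}
K(u) \, \omega(u) \;=\; -\,\omega_{uu}(u).
\end{equation*}
Since $\omega(u) = \|X(\gamma(u))\|_g > 0$ on the regular part $N_r$, this may be divided through or left in product form as convenient.

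From here the equivalence is immediate. If $g_f$ has constant Gauss curvature $K$, substituting the constant $K$ into the displayed formula gives $\omega_{uu}(u) + K\,\omega(u) = 0$. Conversely, if $\omega$ satisfies $\omega_{uu}(u) + K\,\omega(u) = 0$ for a constant $K \in \mathbb{R}$, then by~\eqref{eq-K} the Gauss curvature $K(u) = -\omega_{uu}(u)/\omega(u)$ equals $K$ identically on the interval $(a,b)$.

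There is no real obstacle here; the only point deserving a remark is the nonvanishing of $\omega$, which is automatic on $N_r$ and licenses the division in~\eqref{eq-K}. Thus the theorem reduces to the ODE rewriting of the already-derived pointwise formula for $K(u)$.
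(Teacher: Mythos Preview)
Your proposal is correct and matches the paper's own treatment: the theorem is stated as a direct consequence of equation~\eqref{eq-K} (the paper writes ``Consequently, we have'' and provides no further proof, citing~\cite{MO}). Your reading of the equivalence off the pointwise formula $K(u)=-\omega_{uu}(u)/\omega(u)$, together with the observation that $\omega>0$ on $N_r$, is exactly the intended argument.
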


\section{A parametric equation for  loxodromes}

In this section, we obtain the equation of  the loxodromes on an invariant surface which are not orbits in terms of the parameters $u$ and $v$.

Let $M$ be a $G_X$-invariant surface of $(N^3,g)$, locally parametrized by
\eqref{eq-psi}; then the induced metric is given by
$$
g_f=E(u)\,du^2+2\,F(u)\,du\,dv+(\omega(u))^2\,dv^2.
$$
Now, let $\alpha(s)=\psi(u(s),v(s))$ be a loxodrome on $M$ parametrized by arc length, so that
 \begin{equation}\label{arc}
   1=g(\alpha',\alpha')= E(u(s))\, u'^2+2\, F(u(s))\,u'\,v'+(\omega (u(s)))^2\,v'^2.
\end{equation}
Denoting by $\vartheta_0\in [0,\pi)$ the constant angle under which the curve $\alpha$ meets the orbits of the Killing vector field $X$, we have
\begin{equation}\label{lox}
\begin{aligned}
\omega(u(s)) \cos\vartheta_0&=g(\alpha',X)= g(\alpha',\psi_v)\\
&=F(u(s))\,{u'}(s)+(\omega(u(s)))^2\,{v'}(s).
\end{aligned}.
\end{equation}
In the following, we suppose that $\vartheta_0\neq 0$. Then, the loxodrome is not an orbit and
$$
\alpha (s) \neq \psi\Big(u_0,\frac{s}{\omega(u_0)}\Big), \qquad u_0\in (a,b).
$$
\begin{remark}
If the loxodrome $\alpha(s)$
 is orthogonal to all the orbits that it meets (i.e. $\vartheta_0=\pi/2$),  then it is a geodesic. In fact, 
we observe that, as $\alpha$ cannot be an orbit, it is a geodesic if satisfies the following system (see \cite{MO2})
\begin{equation}\label{eqgeod-bis}
\left\{\begin{aligned}
\|\alpha'\|&=1,\\
(F\,{u'}+\omega^2\,{v'})'&=0.
\end{aligned}\right.
\end{equation} 
We only have to show that the second equation of \eqref{eqgeod-bis} is satisfied. By using \eqref{lox}, from $g(\alpha',X)=0$ we get $F u'+\omega^2 v'=0$.
\end{remark}

\begin{lemma}\label{tgeod}
Let ${M}\subset ({N}^3,g)$ be a $G_X$-invariant surface locally parametrized by
$\psi(u,v)$ given by \eqref{eq-psi} and let $\alpha (s)=\psi(u(s),v(s))$ be a loxodrome, parametrized by arc length,
which is not an orbit. Then,
 \begin{equation}\label{eqsist}
    \left\{\begin{aligned}
    &F(u(s))\,{u'}+\omega(u(s))^2\,{v'}=\omega(u(s))\,\cos\vartheta_0,\\
    &{u'}^2=\sin^2\vartheta_0.
    \end{aligned}\right.
 \end{equation}
Conversely, if system~\eqref{eqsist} is satisfied and ${u'}\neq 0$, then $\alpha$ is a loxodrome parametrized by arc length.
 \end{lemma}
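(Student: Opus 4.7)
The plan is to see the system \eqref{eqsist} as a repackaging of the two defining conditions on $\alpha$ (being a loxodrome and having unit speed), once we exploit the identity $EG-F^2=\omega^2$ from \eqref{eq-E}.

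The first equation in \eqref{eqsist} is essentially \eqref{lox}: the defining condition that $\alpha$ meets each orbit at the constant angle $\vartheta_0$ says $g(\alpha',X)=\|X\|\|\alpha'\|\cos\vartheta_0=\omega(u(s))\cos\vartheta_0$, and the right-hand side expansion $g(\alpha',X)=g(\alpha',\psi_v)=Fu'+\omega^2 v'$ is just the coordinate computation in the $(u,v)$ parametrization. So the first equation is free.

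For the second equation I would solve the first equation for $v'$, namely $v'=(\omega\cos\vartheta_0-Fu')/\omega^{2}$, and substitute into the arc-length condition \eqref{arc}. The cross and $v'^{2}$ terms combine telescopically:
\begin{equation*}
2Fu'v'+\omega^{2}v'^{2}=\frac{(\omega\cos\vartheta_0-Fu')(\omega\cos\vartheta_0+Fu')}{\omega^{2}}=\cos^{2}\vartheta_0-\frac{F^{2}}{\omega^{2}}\,u'^{2}.
\end{equation*}
Substituting back into \eqref{arc} gives $1=\bigl(E-F^{2}/\omega^{2}\bigr)\,u'^{2}+\cos^{2}\vartheta_0$. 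Now invoke \eqref{eq-E}, which reads $E\omega^{2}-F^{2}=\omega^{2}$, i.e. $E-F^{2}/\omega^{2}=1$. This collapses the identity to $u'^{2}=\sin^{2}\vartheta_0$, as required.

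For the converse, assume \eqref{eqsist} holds and $u'\neq 0$. Running the same algebra in reverse, the first equation together with \eqref{eq-E} yields $g(\alpha',\alpha')=u'^{2}+\cos^{2}\vartheta_0$, and then the second equation gives $g(\alpha',\alpha')=\sin^{2}\vartheta_0+\cos^{2}\vartheta_0=1$, so $\alpha$ is parametrized by arc length. Once unit speed is established, the first equation of \eqref{eqsist} becomes $g(\alpha',X)=\omega\cos\vartheta_0=\|X\|\|\alpha'\|\cos\vartheta_0$, so $\alpha$ meets every orbit at the constant angle $\vartheta_0$ and is therefore a loxodrome. (The hypothesis $u'\neq 0$ is used to guarantee $\alpha$ is not an orbit, ensuring $\vartheta_0$ really is the angle with $X$ rather than an incidental value.)

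The step that really does the work is the application of \eqref{eq-E}: without the identity $E-F^{2}/\omega^{2}=1$ one would be left with a coefficient depending on $u$ in front of $u'^{2}$, and the clean decoupling of $u'^{2}=\sin^{2}\vartheta_0$ from the rest of the geometry would fail. So the main (very mild) obstacle is recognizing that \eqref{eq-E} is exactly what one needs; the rest is routine algebra.
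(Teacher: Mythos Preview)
Your proof is correct and follows essentially the same route as the paper: the first equation of \eqref{eqsist} is read off directly from \eqref{lox}, the second is obtained by substituting $v'=(\omega\cos\vartheta_0-Fu')/\omega^{2}$ into the arc-length condition \eqref{arc} and invoking the identity $E-F^{2}/\omega^{2}=1$ from \eqref{eq-E}, and the converse is the same algebra run backwards.
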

 \begin{proof}
If $\alpha$ is a loxodrome parametrized by arc length, then condition \eqref{lox}, and therefore the first equation of system~\eqref{eqsist} is satisfied. Also, the vector field
$\alpha'(s)=\psi_u\,u'(s)+\psi_v\,v'(s)$ verifies \eqref{arc}.
Moreover, 
\begin{equation}\label{eqc}\begin{aligned}
2\,F(u(s))\,{u'}\,{v'}+\omega(u(s))^2\,{v'}^2 &= v'(s)\left(F(u(s))\,{u'}+ \omega(u(s)) \cos\vartheta_0\right) \\
&= \left(F(u(s))\,{u'}+ \omega(u(s)) \cos\vartheta_0\right)\;  \frac{\omega(u(s)) \cos\vartheta_0 - F(u(s))\,{u'}}{\omega(u(s))^2} \\
&= \frac{\omega(u(s))^2\,\cos^2\vartheta_0 - F(u(s))^2\,{u'}^2}{\omega(u(s))^2}.
\end{aligned}
\end{equation}

Therefore, by substituting  \eqref{eqc} in \eqref{arc} and using \eqref{eq-E} we obtain the second equation of \eqref{eqsist}:
$$
1=\frac{E(u(s))\,\omega(u(s))^2-F(u(s))^2}{\omega(u(s))^2}\,{u'}^2+\cos^2\vartheta_0={u'}^2+\cos^2\vartheta_0.
$$

Conversely, if system~\eqref{eqsist} is satisfied, from the second equation of  \eqref{eqsist} by taking into account  \eqref{eq-E}  we have 
\begin{equation}\label{eqarc2.bis}\begin{aligned}
1&={u'}^2+\cos^2\vartheta_0=
\bigg(\frac{E(u(s))\,\omega(u(s))^2-F(u(s))^2}{\omega(u(s))^2}\bigg)\,{u'}^2+\cos^2\vartheta_0\\
&=E(u(s))\,{u'}^2+\frac{\omega(u(s))^2\,\cos^2\vartheta_0-F(u(s))^2\,{u'}^2}{\omega(u(s))^2}.
\end{aligned}
\end{equation}
Now we use the first equation of  \eqref{eqsist} to obtain
\begin{equation}\label{eqarc2}\begin{aligned}
1&=
E(u(s))\,{u'}^2+v'\,[\omega(u(s))\,\cos\vartheta_0+F(u(s))\,{u'}]
\\&=E(u(s))\,{u'}^2+2\,F(u(s))\,{u'}\,{v'}+\omega(u(s))^2\,{v'}^2\\&=
g(\alpha',\alpha'),
\end{aligned}
\end{equation}
so that $\alpha$ has unit speed. Moreover, since $\alpha$ is not an orbit, from the first of equation  \eqref{eqsist} we conclude that $\alpha$ is a loxodrome.
 \end{proof}
 
By integrating system~\eqref{eqsist} we have the following
\begin{theorem}
A loxodrome on a $G_X$-invariant surface ${M}\subset ({N}^3,g)$, which is not an orbit, can be locally parametrized by $\beta(u)=\psi(u,v(u))$, where
 \begin{equation}\label{eq-integral}
    v(u)=\int_{u_0}^u \left(\frac{-F}{\omega^2} \pm \frac{\cot\vartheta_0}{\omega}\right)\,dt.
\end{equation}
\end{theorem}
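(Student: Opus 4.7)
The plan is to derive the theorem directly from Lemma~\ref{tgeod}, which has already encoded the loxodrome condition as the system~\eqref{eqsist}. Since $\alpha$ is not an orbit we have $\vartheta_0\neq 0$, hence $\sin\vartheta_0\neq 0$, and the second equation of~\eqref{eqsist} gives $u'(s)=\pm\sin\vartheta_0\neq 0$. By the inverse function theorem, $s\mapsto u(s)$ is locally invertible, so the loxodrome admits a local reparametrization of the form $\beta(u)=\psi(u,v(u))$ with $v$ a smooth function of $u$.

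Next I would compute $dv/du$ using the first equation of~\eqref{eqsist}. Solving that equation for $v'(s)$ yields
\begin{equation*}
v'(s)=\frac{\omega(u(s))\cos\vartheta_0-F(u(s))\,u'(s)}{\omega(u(s))^2},
\end{equation*}
and then dividing by $u'(s)=\pm\sin\vartheta_0$ gives
\begin{equation*}
\frac{dv}{du}=\frac{v'(s)}{u'(s)}=-\frac{F(u)}{\omega(u)^2}\pm\frac{\cot\vartheta_0}{\omega(u)},
\end{equation*}
where the sign matches the sign chosen for $u'(s)$. Integrating this ODE from $u_0$ to $u$ produces the formula~\eqref{eq-integral}.

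There is really no hard step here: the substantive work was carried out in the lemma, and the theorem is essentially a direct integration once we use $u$ as the parameter. The only point deserving a brief comment is that the local parametrization is valid only where $u'(s)\neq 0$, which is guaranteed by the assumption $\vartheta_0\neq 0$; this is precisely the condition stating that $\alpha$ is not an orbit. The $\pm$ sign in~\eqref{eq-integral} reflects the two possible orientations along which the loxodrome can be traversed relative to the parameter $u$.
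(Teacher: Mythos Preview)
Your proof is correct and follows the same overall plan as the paper: invoke Lemma~\ref{tgeod}, use $u'(s)=\pm\sin\vartheta_0\neq 0$ to reparametrize by $u$, derive an ODE for $dv/du$, and integrate.

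The only difference is in how the ODE is obtained. The paper multiplies the unit-speed relation~\eqref{arc} by $(ds/du)^2$, substitutes $(ds/du)^2=1/\sin^2\vartheta_0$, and arrives at the quadratic
\[
\omega^2\Big(\frac{dv}{du}\Big)^2+2F\,\frac{dv}{du}+E-\frac{1}{\sin^2\vartheta_0}=0,
\]
whose discriminant simplifies via the identity $EG-F^2=\omega^2$ to yield $dv/du=(-F\pm\cot\vartheta_0\,\omega)/\omega^2$. You instead solve the first equation of~\eqref{eqsist} for $v'(s)$ and divide by $u'(s)$, bypassing the quadratic entirely. Your route is shorter and does not require invoking~\eqref{eq-E} a second time; the paper's route has the minor advantage of making the appearance of the $\pm$ sign arise from the quadratic formula rather than from a choice of sign for $u'$, but the end result is identical.
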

\begin{proof}
We consider the surface $M$ locally parametrized by
$\psi(u,v)$ given by \eqref{eq-psi} and we suppose that $\alpha (s)=\psi(u(s),v(s))$ is a
loxodrome on $M$ that is not an orbit parametrized by arc length. As ${u'}\neq 0$ we can locally invert the function $u=u(s)$ obtaining $s=s(u)$ and  we can consider the parametrization of $\alpha$ given by
$$\beta(u)=\alpha(s(u))=\psi(u,v(u)),\qquad v(u)=v(s(u)).$$

By multiplying  by $(ds/du)^2$  the equation
$$
E(u(s))\, u'(s)^2+2\, F(u(s))\,u'(s)\,v'(s)+\omega (u(s))^2\,v'(s)^2=g(\alpha',\alpha')=1
$$
 we obtain
\begin{equation}\label{eqpri}
E+2\,F\,\frac{dv}{du}+\omega^2\,\Big(\frac{dv}{du}\Big)^2=\Big(\frac{ds}{du}\Big)^2.
\end{equation}
Moreover, from the second equation in \eqref{eqsist}, we get
\begin{equation}\label{eqpri1}
\Big(\frac{ds}{du}_{\big|u(s)}\Big)^2=\frac{1}{{u'}(s)^2}=\frac{1}{(\sin\vartheta_0)^2}.
\end{equation}
By substitution of \eqref{eqpri1} in \eqref{eqpri} gives
\begin{equation}\label{eqpri1-b}\omega^2\,\Big(\frac{dv}{du}\Big)^2+2\,F\,\frac{dv}{du}+E-\frac{1}{(\sin\vartheta_0)^2}=0.
\end{equation}
Now, taking into account \eqref{eq-E}, we observe that
$$
F^2-\omega^2\,\Big[E-\frac{1}{(\sin\vartheta_0)^2}\Big]=(\cot\vartheta_0\,\omega)^2.
$$
Consequently, from \eqref{eqpri1-b} we obtain
$$
   \frac{dv}{du}=\frac{-F\pm\cot\vartheta_0\,\omega}{\omega^2},
$$
from this formula one obtains at once \eqref{eq-integral}, which is the equation of a segment of a loxodrome which is not an orbit.
\end{proof}

As immediate consequence of equation~\eqref{eqpri1} is given by
\begin{corollary}
The length of a loxodrome which is not an orbit on a $G_X$-invariant surface ${M}\subset ({N}^3,g)$  between two orbits $u_1$ and $u_2$ is given by
\begin{equation}\label{arc-lox}
s=\frac{u_2-u_1}{\sin\vartheta_0}.
\end{equation}
\end{corollary}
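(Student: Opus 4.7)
The plan is to read off the result directly from equation~\eqref{eqpri1}, which was established in the proof of the preceding theorem. That equation states
$$
\Big(\frac{ds}{du}\Big)^2=\frac{1}{\sin^2\vartheta_0}.
$$
Since $\vartheta_0\in(0,\pi)$ (the loxodrome is not an orbit, so $\vartheta_0\neq 0$, and we may also exclude $\vartheta_0=\pi$ up to orientation), we have $\sin\vartheta_0>0$ and hence
$$
\frac{ds}{du}=\pm\frac{1}{\sin\vartheta_0}.
$$

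The next step is to choose the sign. Since $u'(s)\neq 0$ on the interval of definition of the loxodrome, the function $u(s)$ is strictly monotone; after possibly reversing the orientation of the arc length parameter we may assume $u(s)$ is increasing, so that $ds/du=1/\sin\vartheta_0>0$. Without loss of generality assume $u_1<u_2$.

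Finally, integrating from $u_1$ to $u_2$ yields
$$
s=\int_{u_1}^{u_2}\frac{ds}{du}\,du=\int_{u_1}^{u_2}\frac{du}{\sin\vartheta_0}=\frac{u_2-u_1}{\sin\vartheta_0},
$$
which is the desired formula~\eqref{arc-lox}. There is no genuine obstacle: the corollary is simply the observation that the right-hand side of~\eqref{eqpri1} is a constant depending only on the angle $\vartheta_0$, so the arc length parameter is an affine function of the profile-curve parameter $u$ with slope $1/\sin\vartheta_0$.
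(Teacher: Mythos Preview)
Your proof is correct and follows exactly the approach indicated in the paper: the corollary is stated there as an immediate consequence of equation~\eqref{eqpri1}, and you have spelled out precisely that computation (taking square roots, fixing orientation, and integrating the constant $1/\sin\vartheta_0$ from $u_1$ to $u_2$).
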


\subsection{Loxodromes on translational and helicoidal surfaces in $\r^3$}
We consider the Euclidean three-dimensional space $\r^3$ with the canonical metric $g=dx^2+dy^2+dz^2$. Then  the Killing
vector fields generate  translations and rotations. 

In the case of translations along the direction of the unitary Killing vector field $X$, the
quotient space $\r^3/G_{X}$ is the plane $\r^2$ (orthogonal to $X$) equipped with the flat metric. The  invariant surface is a flat right cylinder over the curve 
$$\tilde{\gamma}(u)=(\xi_1(u),\xi_2(u)),\qquad \xi_1'^2+\xi_2'^2=1,$$ 
and it can be parametrized by 
$$\psi(u,v)=\gamma(u)+v\,X,$$ 
where $\gamma$ is a horizontal lift of $\tilde{\gamma}$. Therefore, the coefficients of the induced metric are  $E=1$, $F=0$ and $\omega=1$ and its loxodromes are parametrized by 
\[
\beta(u)=\psi(u,v(u))=\gamma(u) \pm \cot\vartheta_0\,(u-u_0)\,X.
\]
So these curves form an angle $\vartheta_0$ with the Killing vector field $X$, i.e. they are general helices whose axis is $X$.  

In the case of the helicoidal surfaces we can assume, without loss of generality, that the Killing vector field is
$X=-y\frac{\partial}{\partial x}+x\frac{\partial}{\partial y}+a \frac{\partial}{\partial z}$, with $a\in\r$. Introducing cylindrical coordinates $(r,\theta,z)$, we have  
\[
X=\frac{\partial}{\partial \theta}+a\,\frac{\partial}{\partial z}, \qquad
 \qquad
g =dr^2 + r^2 d \theta^2 + dz^2 ,
\]

and $\{\xi_1=r,\xi_2=z-a\,\theta\}$ is a set of independent $G_X$-invariant functions. Therefore, the regular part of the orbit space
 is  $$\r^3_r/G_{X}=\{(\xi_1,\xi_2)\in\r^2\;:\;\xi_1>0\},$$ 
with respect to its orbital metric (see \cite{docarmo},\cite{FIMEPE})
  $$
\tilde{g}=d\xi_1^2+\frac{\xi_1^2}{a^2+\xi_1^2}\,d\xi_2^2,
$$
 the  projection
$$
(r,\theta,z)\xmapsto{\pi}(r,z-a\,\theta)
$$
becomes a Riemannian submersion.
Now we suppose that  the profile curve of a $G_X$-invariant surface $\tilde{\gamma}(u)=(\xi_1(u),\xi_2(u))\in \r^3_r/G_{X}$,  is parametrized by arc length, so that \begin{equation}\label{ppca}
\xi_1'^2(u)+\frac{\xi_1(u)^2}{a^2+\xi_1(u)^2}\,\xi_2'(u)^2=1.
\end{equation}  
Then the norm of $X$ restricted to the profile curve is $\omega(u)=\sqrt{r(u)^2+a^2}$ and a lift of $\tilde{\gamma}$ with respect to $\pi$ is $\gamma(u)=(\xi_1(u),0,\xi_2(u))$. Therefore, the coefficients of the induced metric of the helicoidal surfaces 
$$\psi(u,\theta)=(\xi_1(u)\,\cos \theta, \xi_1(u)\,\sin \theta, \xi_2(u)+a\,\theta)$$ 
are

\begin{equation}\label{star}
E=\xi_1'(u)^2+\xi_2'(u)^2,\qquad F=a\,\xi_2'(u),\qquad G=\omega(u)^2.
\end{equation}

From \eqref{ppca} we get
\begin{equation}\label{eq-E-b}
\xi_2'(u)^2 = \frac{\omega(u)^2}{r(u)^2}(1 - r'(u)^2)
\end{equation} 
and then, from \eqref{star},
\[
\frac{F}{\omega^2}=\frac{a\,\sqrt{1-\xi_2'(u)^2}}{r(u)\,\sqrt{a^2+ r(u)^2}}.
\]
By using \eqref{eq-integral} we conclude that the loxodromes on  helicoidal surfaces can be parametrized by
$\beta(u)=\psi(u,\theta(u)),$
where 
$$\theta(u)=\int_{u_0}^u\Big(-\frac{a\,\sqrt{1-r'(t)^2}}{r(t)\,\sqrt{a^2+r(t)^2}}\pm\frac{\cot\vartheta_0}{\sqrt{a^2+r(t)^2}}\Big)\,dt.$$

\begin{remark}
If $\gamma$ is a
horizontal lift of $\tilde{\gamma}$ (i.e. $F=0$
and $E=1$), equation~\eqref{eq-integral} reduces to
$$\theta(u)=\pm\int_{u_0}^u\frac{\cot\vartheta_0}{\sqrt{a^2+r(t)^2}}\,dt.$$
\end{remark}

We observe that, in the case of the surfaces of revolution  (i.e. for  $a=0$),
$$\beta(u)=(\xi_1(u)\,\cos \theta(u), \xi_1(u)\,\sin \theta(u), \xi_2(u)),
$$
with \begin{equation}
\label{eq-rot}\theta(u)=\pm\cot\vartheta_0 \int_{u_0}^u \frac{dt}{\xi_1(t)}.
\end{equation}
\begin{example}[Sphere]
The unit sphere $\s^3$ is obtained by choosing $a=0$ and $\tilde{\gamma}(u)=(\sin u,\cos u)$, $u\in (0,\pi)$. In this case, equation~\eqref{eq-rot} gives 
$$\theta(u)=\cot\vartheta_0\,\ln\Big(\tan\Big(\frac{u}{2}\Big)\Big)+c,\qquad c\in\r.$$
\end{example}
\begin{example}[Pseudosphere]
Choosing $$\xi_1(u)=e^{-u},\qquad \xi_2(u)=\arctanh(\sqrt{1-e^{-2u}})-\sqrt{1-e^{-2u}},\qquad a=0,$$ we have a pseudosphere. 
A loxodrome on this surface is parametrized by
$$\theta(u)=\cot\vartheta_0\,e^u+c,\qquad c\in\r.$$
\end{example}
\begin{example}[Twisted sphere]
If we consider $a=1$ and $\xi_1(u)=\sin u$,  $u\in (0,\pi)$, the integration of \eqref{ppca} leads  to an elliptic integral of the second kind\footnote{The
elliptic integral of the second kind is defined by
$$ E(\phi,m)=\int_{0}^{\phi}\sqrt{1-m\sin^2\theta}\,d\theta.$$} and $\xi_2(u)=E(u,-1)$. The corresponding surface is the twisted sphere (see the Figure~\ref{twisted}).
\begin{figure}[h!]
{\includegraphics[width=0.68\textwidth]{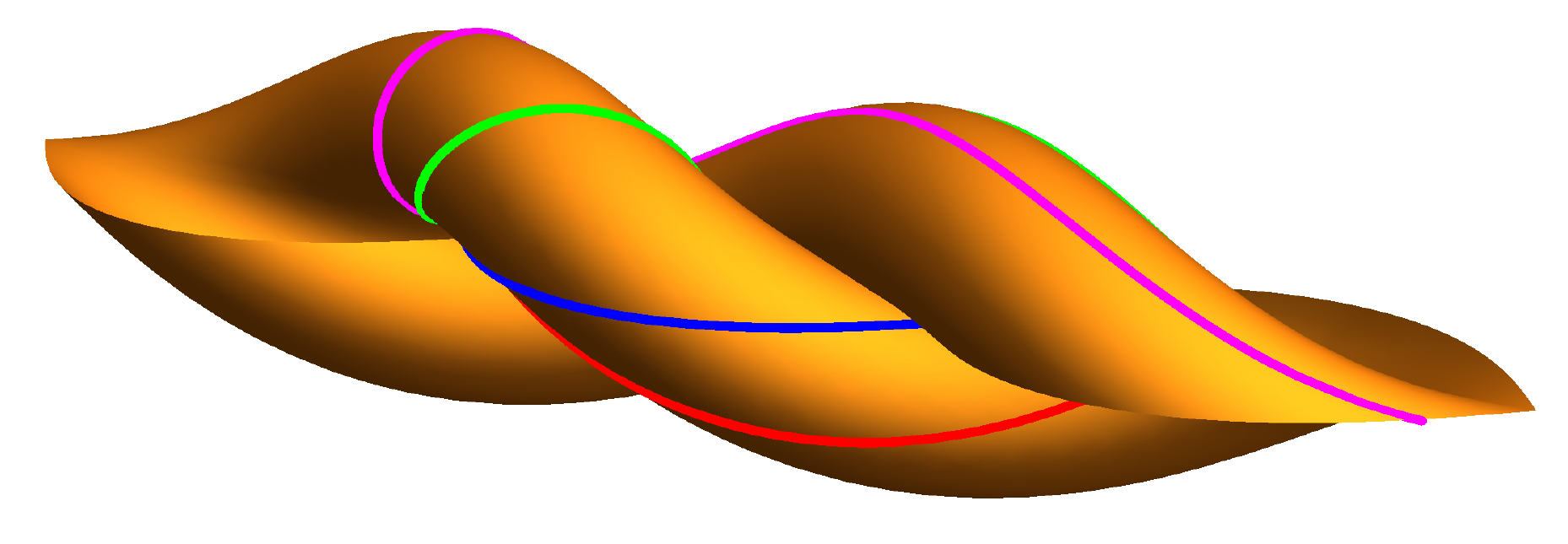}}
\begin{pspicture}(-5,0)(5,0.1)
\psset{xunit=1cm,yunit=1cm,linewidth=.03}
\put(-1.5,2.4){$\vartheta_0=\pi/4$}
\put(-2.1,3.7){$\vartheta_0=\pi/6$}
\put(3.9,0.8){$\vartheta_0=\pi/8$}
\put(-1.2,0.6){$\vartheta_0=\pi/3$}
\end{pspicture}
\caption{Four loxodromes of the twisted sphere in $\r^3$, obtained for $a =1$.}\label{twisted}
\end{figure}
\end{example}

\begin{example}[Twisted pseudosphere]
If we consider $a=1$ and $\xi_1(u)=e^{-u}$, the integration of \eqref{ppca} leads  to 
%an Euler beta function $B(a,b)$ and  the Euler gamma function $%\Gamma(z)$ 
the Gaussian hypergeometric function\footnote{The Gaussian hypergeometric function is defined for $|z|<1$ by the power series
$$\text{Hypergeometric}_2F_1[a, b, c, z]=\sum_{n=0}^\infty\frac{(a)_n\,(b)_n}{(c)_n}\frac{z^n}{n!}.$$ Here $a,b,c\in\r$, $c\notin \mathbb{Z}_{\leq 0}$ and $(q)_n$ is a Pochhammer symbol.} and $$\xi_2(u)=e^u\,\sqrt{1 - e^{-4 u}} \,(1 - 2\, \text{Hypergeometric}_2F_1[1/4, 1, 3/4, e^{4 u}]).$$ The corresponding surface is the twisted pseudosphere (see the Figure~\ref{twisted2}).
\begin{figure}[h!]
{\includegraphics[width=0.75\textwidth]{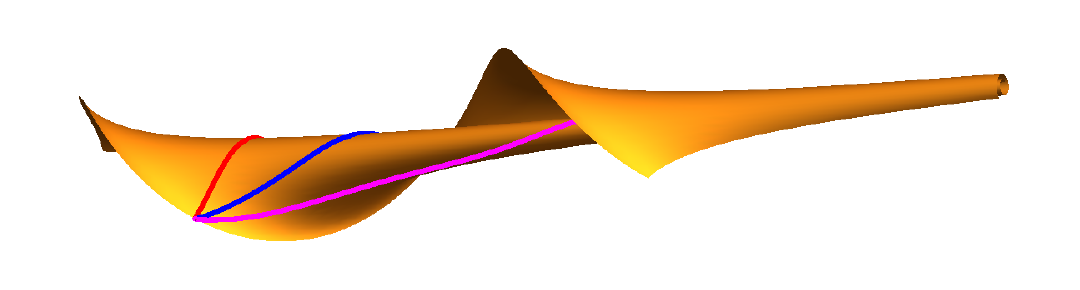}}
\begin{pspicture}(-5,0)(5,0.1)
\psset{xunit=1cm,yunit=1cm,linewidth=.03}
\put(-4.4,2.5){$\vartheta_0=\pi/2$}
\put(-2.5,2.5){$\vartheta_0=\pi/4$}
%\put(3.9,0.8){$\vartheta_0=\pi/8$}
\put(-1.2,1.3){$\vartheta_0=\pi/8$}
\end{pspicture}
\caption{Three loxodromes of the twisted pseudosphere in $\r^3$, obtained for $a = 1$.}\label{twisted2}
\end{figure}
\end{example}

\section{Some remarks about loxodromes and geodesics on invariant surfaces}
In this section, we extend a classical result about loxodromes and geodesics on rotational surfaces in the Euclidean threee-space, to rotational surfaces in Bianchi-Cartan-Vranceanu spaces. We start proving the following:
\begin{theorem}\label{geo-lox}
Assume that $\alpha (s)=\psi(u(s),v(s))$ is a loxodrome, parametrized by arc length, on a $G_X$-invariant surface ${M}\subset ({N}^3,g)$  locally defined by the
$\psi(u,v)$ in  \eqref{eq-psi}, with $\vartheta_0\neq 0,\pi/2$. If $\alpha$ is a geodesic of $M$, then the surface is flat.
\end{theorem}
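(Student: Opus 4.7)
The plan is to combine the loxodrome conditions from Lemma \ref{tgeod} with the geodesic condition from \eqref{eqgeod-bis}, read off that $\omega$ is constant along the curve, and then conclude $K=0$ via the formula \eqref{eq-K}.

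First, since $\alpha$ is a loxodrome parametrized by arc length that is not an orbit (as $\vartheta_0\neq 0$), Lemma \ref{tgeod} applies and gives
\[
F(u(s))\,u'+\omega(u(s))^2\,v'=\omega(u(s))\cos\vartheta_0,\qquad u'^2=\sin^2\vartheta_0.
\]
The second equation, together with $\vartheta_0\neq 0$, forces $u'=\pm\sin\vartheta_0$ to be a nonzero constant; in particular the function $s\mapsto u(s)$ is a diffeomorphism onto an open interval $I$.

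Next, since $\alpha$ is a geodesic and not an orbit, the system \eqref{eqgeod-bis} holds, and in particular the quantity $F\,u'+\omega^2\,v'$ is constant along $\alpha$. Substituting from the first equation of \eqref{eqsist} yields
\[
\omega(u(s))\cos\vartheta_0=\mathrm{const}.
\]
Because $\vartheta_0\neq \pi/2$ we have $\cos\vartheta_0\neq 0$, so $\omega(u(s))$ is constant in $s$. As $u(s)$ sweeps the open interval $I$, this means $\omega\equiv\mathrm{const}$ on $I$, whence $\omega_{uu}\equiv 0$ on $I$.

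Finally, formula \eqref{eq-K} gives
\[
K(u)=-\frac{\omega_{uu}(u)}{\omega(u)}=0
\]
for every $u\in I$, so the invariant surface is flat on the open region swept by the orbits that meet $\alpha$. The only subtlety I anticipate is verifying that one may legitimately apply the geodesic characterization \eqref{eqgeod-bis} (which requires the curve not to be an orbit) and pointing out explicitly that the hypothesis $\vartheta_0\neq 0,\pi/2$ is used twice: once to ensure $u'\neq 0$ so that $\omega$ is forced constant on an actual interval of $u$-values, and once to ensure $\cos\vartheta_0\neq 0$ so that constancy of $\omega\cos\vartheta_0$ transfers to constancy of $\omega$ itself.
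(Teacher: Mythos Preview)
Your proof is correct and follows essentially the same route as the paper: both arguments show that the geodesic condition forces $\omega(u(s))\cos\vartheta_0$ to be constant, use $\vartheta_0\neq\pi/2$ to deduce $\omega$ is constant along the curve, and then conclude $K=0$ from \eqref{eq-K}. The only cosmetic difference is that the paper invokes Clairaut's Theorem from \cite{MO2} directly to obtain $\omega(u(s))\cos\vartheta_0=c$, whereas you obtain the same identity by combining the first equation of \eqref{eqsist} with the second equation of \eqref{eqgeod-bis}; since \eqref{eqgeod-bis} is itself quoted from \cite{MO2}, the two arguments are equivalent.
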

\begin{proof}
First we observe that from \eqref{eq-K} the Gauss curvature $K$ of an invariant surface depends only on the profile curve,
that is $K$ is constant along any orbit. This implies
that if the Gauss curvature is constant along a curve,
then either the curve is an orbit or the curve lies in
a part of the surface where the  Gauss curvature is constant.
Now, from the Clairaut's Theorem proved in \cite{MO2}, we get that the loxodrome $\alpha(s)$ is a geodesic  if, and only if,  $\omega(u(s))\cos\vartheta_0=c$, $c\in\r$. Therefore, as $\vartheta_0\neq \pi/2$, it follows that $\omega(u(s))$ is constant. Also, taking into account  the fact that $\alpha$ is not an orbit, we conclude that $\omega_{uu}(u(s))=0$ and  equation~\eqref{eq-K} implies that the Gauss curvature of $M$ is zero along the curve $\alpha$. Then, from the initial remark, we conclude that $\alpha$ lies in
a part of the invariant surface with zero Gauss curvature. 
\end{proof}
\subsection{Rotational surfaces in the Bianchi-Cartan-Vranceanu spaces}
The Bianchi-Cartan-Vranceanu spaces (BCV-spaces) are the three-dimensional Riemannian manifolds endowed with the Riemannian metrics of  the following two-parameter
family  (see \cite{Bi,Ca,Vr})
\begin{equation}\label{1.1}
 g_{\ell,m} =\frac{dx^{2} + dy^{2}}{[1 + m(x^{2} + y^{2})]^{2}} +  \left(dz +
\frac{\ell}{2} \frac{ydx - xdy}{[1 + m(x^{2} + y^{2})]}\right)^{2},\quad \ell,m \in {\r},
\end{equation}
defined on 
$N=\r^3$ if $m\geq 0$ and on $N=\{(x,y,z)\in\r^3: x^2+y^2<-1/m\}$ otherwise.
Their geometric interest lies in the
following fact: {\it the family of metrics~\eqref{1.1}
includes all three-dimensional
homogeneous metrics whose group of isometries has dimension $4$ or
$6$, except for those of constant negative sectional curvature}.
The group of isometries of these spaces contains a one-parameter subgroup isomorphic to $SO(2)$, whose infinitesimal generator is the Killing vector field given by
$$
X=-y\frac{\partial}{\partial x}+x\frac{\partial}{\partial y}.
$$
The orbits of $X$ are geodesic circles on horizontal planes with centre on the $z$-axis and the volume function
\begin{equation}\label{eq:omega}
\omega(u)=\sqrt{g_{m,\ell}(X,X)}=\frac{f(u)}{2(1+m\,f(u)^2)}\sqrt{4 +\ell^2\, f(u)^2},
\end{equation}
where $f>0$ represents the Euclidean radius of the principal orbit. 
Concerning $SO(2)$-invariant surfaces in the BCV-spaces, as a consequence of Theorem~\ref{geo-lox} we have the following result, well known in the space $\r^3$.

\begin{corollary}
Upon a rotational surface $M$ in a BCV-space a loxodrome which is neither a meridian nor a parallel cannot be a geodesic unless $M$ is a vertical cylinder.
\end{corollary}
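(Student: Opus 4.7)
The strategy is to combine Theorem~\ref{geo-lox} with the explicit form of the volume function $\omega$ in the BCV-spaces given by~\eqref{eq:omega}.

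A loxodrome which is neither a parallel (orbit) nor a meridian corresponds to an angle $\vartheta_0 \neq 0,\,\pi/2$, so Theorem~\ref{geo-lox} applies. Inspecting its proof, I would extract the sharper pointwise conclusion that $\omega(u(s))$ is constant along $\alpha$: this follows from the Clairaut integral $\omega(u(s))\cos\vartheta_0 = c$ together with $\cos\vartheta_0 \neq 0$. Since moreover $\alpha$ is not an orbit, Lemma~\ref{tgeod} yields ${u'}^2 = \sin^2\vartheta_0 \neq 0$, so $u(s)$ varies non-trivially and $\omega$ is constant on an open interval $I$ of $u$-values.

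Next I would substitute~\eqref{eq:omega}, viewing the volume function along the profile curve as $\omega = \Omega\circ f$, with
$$\Omega(f) = \frac{f\sqrt{4+\ell^2 f^2}}{2(1+m f^2)}, \qquad f > 0.$$
A direct computation gives
$$\frac{d(\Omega^2)}{d(f^2)} = \frac{2 + (\ell^2 - 2m)\,f^2}{2(1+m f^2)^3},$$
whose numerator is a polynomial in $f^2$ of degree at most one with nonzero constant term, hence vanishes on at most a discrete set of values. Therefore $\Omega$ has no flat intervals, and constancy of $\Omega(f(u))$ on $I$ together with continuity of $f$ forces $f$ to be locally constant on $I$, say $f \equiv f_0$.

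Constant Euclidean radius of the principal orbit along the profile curve means that $M$ locally coincides with $\{x^2+y^2 = f_0^2\}$ in BCV coordinates, which is precisely a vertical cylinder, yielding the claim. The only delicate point is the passage from $\omega = $ constant to $f = $ constant, which is handled by the derivative computation above; the calculation is routine but must be carried out to guarantee that the argument works uniformly across the admissible two-parameter family $(\ell, m)$.
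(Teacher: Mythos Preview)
Your proposal is correct and follows essentially the same path as the paper: extract from Clairaut's relation that $\omega(u(s))$ is constant, use $u'\neq 0$ to conclude $\omega$ is constant on an interval of $u$-values, and then analyze the BCV volume function~\eqref{eq:omega} to deduce that $f$ is constant. The only organizational difference is that the paper routes through the flatness conclusion of Theorem~\ref{geo-lox} (so $\omega$ is first shown to be affine, then constant), whereas you bypass this and go directly to $\omega=\text{const}$; the derivative computation $\omega'=0 \iff f'\,[2+(\ell^2-2m)f^2]=0$ and the final step are the same.
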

\begin{proof}
We suppose that $\alpha (s)=\psi(u(s),v(s))$ is a loxodrome, parametrized by arc length, that is also a geodesic on the surface $M$ given by
$$\psi(u,v)=(f(u)\,\cos v,f(u)\,\sin v,g(u)).$$ 
From  Theorem~\ref{geo-lox}, we have that the surface $M$ is flat. Therefore, by using equation~\eqref{eq-main} one as  $\omega(u)=c_1\,u+c_2$, $c_1,c_2\in\r.$ Also, as $\omega(u(s)) = constant$ (see the proof of Theorem~\ref{geo-lox}) and the curve $\alpha$ is not a parallel (i.e. $u'(s)\neq 0$), we have that $\omega_u(u(s))=0$. Then, $c_1=0$ and $\omega(u)=c_2.$ Now, from~\eqref{eq:omega} we obtain (see \cite{PiuProfir}) that $\omega'(u)=0$ if, and only if,
\begin{equation}
f'(u)\,[2+(\ell^{2}-2m)\,f(u)^{2}]=0.
\end{equation}
Consequently, $f(u)=constant$ and $M$ is a vertical cylinder.
\end{proof}

%\textcolor{red}{nuova versione}
%\begin{corollary}
%Upon a rotational surface $M\subset\r^3$ all geodesic of $M$ are loxodrome if and only if  $M$ is a cylinder.
%\end{corollary}
%
%\begin{proof}
%Dalla proposizione \eqref{geo-lox} si ricava che se $\alpha$ è sia una geodetica che una lossodromica di $M$ allora  $M$ ha curvatura Gaussiana nulla. Tutti i paralleli di $M$ sono lossodromiche ma le sole superfici di rotazione con curvatura di Gauss nulla  i cui paralleli sono geodetiche sono i cilindri.
%\end{proof}

\section{Loxodromes on invariant surfaces of $\h^2\times\r$}
Let $\h^{2}=\{(x,y)\in\r^2\,:\,y>0\}$ be the half plane model of the
hyperbolic plane and consider $\h^2\times\r$ endowed with the
product metric
\begin{equation}\label{eq:metric-h2xr}
g=\frac{dx^2+dy^2}{y^2}+dz^2.
\end{equation}

The Lie algebra of the infinitesimal isometries of the product
$(\h^2\times\r,g)$ admits the following basis of Killing vector
fields
\begin{align*}
X_1&=\frac{(x^2-y^2+1)}{2}\frac{\partial}{\partial
x}+xy\frac{\partial}{\partial
y},\\
X_2&=\frac{\partial}{\partial x},\\
X_3&=x\frac{\partial}{\partial x}+y\frac{\partial}{\partial y},\\
X_4&=\frac{\partial}{\partial z}.
\end{align*}

The class of invariant surfaces in $\h^2\times\r$ can be
divided into three subclasses according to the following

\begin{proposition}[\cite{Onnis}]\label{lem-red}
Any surface in $(\h^2\times\r,g)$ which is invariant under the action of
a one-parameter subgroup of isometries $G_X$ generated by a Killing
vector field $X=\sum_{i=1}^4a_i X_i$, $a_i\in\r$, is congruent to a
surface invariant under the action of one of the following groups:
$$
G_{14}=G_{X_1+bX_4}, \qquad G_{24}=G_{aX_2+bX_4},\qquad G_{34}=G_{X_3+bX_4},
$$
where $a,b\in\r.$
\end{proposition}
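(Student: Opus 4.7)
The plan is to reduce the classification of $X = \sum_{i=1}^{4} a_i X_i$ under isometric congruence of $\h^2\times\r$ to a classification of its horizontal part $X_h := a_1 X_1 + a_2 X_2 + a_3 X_3$ under the adjoint action of $\text{Isom}(\h^2)$, modulo the overall rescaling $X \mapsto \lambda X$, $\lambda \neq 0$, which leaves the one-parameter subgroup $G_X$ unchanged. The key observation is that for every $\phi \in \text{Isom}(\h^2)$, the map $\Phi(p,z) := (\phi(p), z)$ is an isometry of $(\h^2\times\r, g)$ with $\Phi_* X_4 = X_4$ and $\Phi_* X_h = \phi_* X_h$; hence the ambient congruence problem is controlled by the adjoint action on the three-dimensional subspace $\text{span}(X_1, X_2, X_3) \cong \mathfrak{isom}(\h^2)$.

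I would then invoke the classical trichotomy in $\mathfrak{sl}(2,\r) \cong \mathfrak{isom}(\h^2)$: up to $PSL(2,\r)$-conjugation, every nonzero element is elliptic, parabolic, or hyperbolic, depending on whether the one-parameter group it generates has an interior fixed point in $\h^2$, a single boundary fixed point on $\partial_\infty \h^2$, or two boundary fixed points. A direct check confirms that $X_1$ vanishes at $(0,1)$ and is therefore elliptic, that $X_2 = \partial_x$ fixes only $\infty \in \partial_\infty \h^2$ and is parabolic, and that $X_3 = x \partial_x + y \partial_y$ fixes $\{0, \infty\}$ and is hyperbolic. Using the transitive (respectively, $2$-transitive) action of $\text{Isom}(\h^2)$ on $\h^2$ and on $\partial_\infty \h^2$, I may choose $\phi$ sending the fixed-point set of $X_h$ to the appropriate canonical one, obtaining $\phi_* X_h = c\, X_j$ for the appropriate $j \in \{1,2,3\}$ and some $c \neq 0$.

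Applying the corresponding $\Phi$ then converts $X$ to $cX_j + a_4 X_4$. In the elliptic ($j=1$) and hyperbolic ($j=3$) cases, rescaling $X$ by $1/c$ yields the canonical form $X_j + bX_4$ with $b = a_4/c$, producing the families $G_{14}$ and $G_{34}$; the parabolic case together with the degenerate subcase $X_h = 0$ (for which $X = a_4 X_4$, absorbed by $a=0$) are conveniently subsumed into the two-parameter family $G_{24} = G_{aX_2 + bX_4}$, $a,b \in \r$. The main technical ingredient is really the $\mathfrak{sl}(2,\r)$ trichotomy together with the identification of $X_1, X_2, X_3$ as elliptic, parabolic, and hyperbolic representatives; these being classical, the proposition then follows by straightforward bookkeeping.
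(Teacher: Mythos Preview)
The paper does not supply its own proof of this proposition: it is quoted from \cite{Onnis} and stated without argument. There is therefore nothing to compare against directly.

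Your proposed argument is correct and is the natural one. The reduction to the adjoint action of $\text{Isom}(\h^2)$ on $\text{span}(X_1,X_2,X_3)\cong\mathfrak{isom}(\h^2)$, via the isometries $\Phi(p,z)=(\phi(p),z)$ which fix $X_4$, is exactly the right mechanism; the identification of $X_1,X_2,X_3$ with elliptic, parabolic, and hyperbolic representatives is accurate (your fixed-point check at $(0,1)$ for $X_1$ is correct, and $X_2=\partial_x$, $X_3=x\partial_x+y\partial_y$ are the standard parabolic and hyperbolic generators in the upper half-plane model). The only cosmetic point is that your normalization leaves some redundancy in the family $G_{24}$ (one could rescale $aX_2+bX_4$ to $X_2+b'X_4$ when $a\neq 0$), but the proposition as stated does not claim a minimal list, only a sufficient one, so this is harmless. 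This is essentially the argument one would expect to find in the cited reference.
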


To understand the shape of an invariant surface in $\h^2\times\r$ we need to describe the
orbits of the three groups  $G_{24}, G_{34}$ and $G_{14}$.
In \cite{Onnis} it has been shown that the orbit of a point $p_0=(x_0,y_0,z_0)\in \h^2\times\r$ is:

$\bullet$ {\em under the action of $G_{24}$} the curve parametrized by
\begin{equation}\label{eq:orbitsg24}
(a\,v+x_0,y_0,b\, v +z_0),\quad v\in(-\epsilon,\epsilon),
\end{equation}
which looks like an Euclidean straight line on the plane $y=y_0$;

$\bullet$ {\em under the action of $G_{34}$} the curve parametrized by
\begin{equation}\label{eq:orbitsg34}
(e^v x_0,e^v y_0,b\, v+z_0),\quad v\in(-\epsilon,\epsilon),
\end{equation}
which belongs to a vertical plane through the $z$-axes and looks like a logarithmic curve;

$\bullet$ {\em under the action of $G_{14}$} the curve parametrized by
\begin{equation}\label{eq:orbitsg14}
(x(v),y(v),b\, v+z_0),\quad v\in(-\epsilon,\epsilon),
\end{equation}
where
$$\left\{\begin{aligned}
x(v)&=\frac{(1-x^2-y^2)\sin v+2x\,\cos
v}{(1-x^2-y^2) \cos v - 2x\sin v +1+x^2+y^2},\\
y(v)&=\frac{2y}{(1-x^2-y^2)\cos v - 2x\,\sin v+1+x^2+y^2}.
\end{aligned}\right.$$
Thus,
$$
(x(v))^2+(y(v))^2-\beta\, y(v)+1=0,\quad \beta=\frac{1+x_0^2+y_0^2}{y_0},
$$
which looks like an Euclidean helix in a right circular cylinder with Euclidean axis in the plane $x=0$.

In the following, we describe explicitly how to parametrize an invariant surface in $(\h^2\times\r,g)$ and we use \eqref{eq-integral} to parametrize the loxodromes on it.
\begin{theorem}\label{teo-h2xr}
Let ${M}$ be a $G_X$-invariant surface of $(\h^2\times\r,g)$, and
let $\tilde{\gamma}(u)=(\xi_1(u),\xi_2(u))$ be its profile curve
in the regular part of the orbit space $({\mathcal B}=\h^2\times\r/G_X,\tilde{g})$,
which is parametrized by the invariant
functions $\xi_1$ and $\xi_2$. With respect to the local para\-me\-trization
$\psi(u,v)$ given by \eqref{eq-psi} we have:
\begin{itemize}
\item[i)] If $G=G_{24}$ is the group generated by $X_2+b\, X_4$, $b\in\r$, then
 the orbit space is ${\mathcal B}=\{(\xi_1,\xi_2)\in\r^2\,:\,\xi_1>0\}$
and a loxodrome can be parametrized by $\psi(u,v(u))$, where
\begin{equation}\label{caso24}
v(u)=\int_{u_0}^u \frac{\left(b \,\sqrt{\xi_2(t)^2-\xi_2'(t)^2}\pm\cot\vartheta_0\right)\;\xi_2(t)}{\sqrt{1+b^2\, \xi_2(t)^2}}\,dt
\end{equation}
and
\begin{equation}\label{caso24-b}
\psi(u,v)=(v,\xi_2(u),b\,v-\xi_1(u)).\end{equation}
In particular, when $b=0$ (i.e. $G=G_2$) a loxodrome can be parametrized by
$$
\beta(u)=\Big(\pm \cot\vartheta_0\,\int_{u_0}^u \xi_2(t)\,dt,\xi_2(u),-\xi_1(u)\Big).
$$
\item[ii)] If $G=G_4$, the invariant surface is a right cylinder and its loxodromes are helices that can be parametrized by
$$
\beta(u)=(\xi_1(u),\xi_2(u),\cot\vartheta_0\,(u-u_0)).
$$
\item[iii)] If $G=G_{34}$ is the group generated by $X_3+b\,X_4$, $b\in\r$,
then the orbit space is ${\mathcal
B}=\{(\xi_1,\xi_2)\in\r^2\,:\,\xi_1\in(0,\pi)\}$ and a loxodrome can be parametrized by $\psi(u,v(u))$, where
\begin{equation}\label{caso34}
v(u)=\int_{u_0}^u \frac{\big(-b \,\sqrt{\sin^2\xi_1(t)-\xi_1'(t)^2}\pm\cot\vartheta_0\big)\,\sin\xi_1(t)}{\sqrt{1+b^2\,\sin^2 \xi_1(t)}}\,dt
\end{equation}
and
\begin{equation}\label{caso34-b}
\psi(u,v)=(e^v\,\cos \xi_1(u),e^v\,\sin \xi_1(u),\xi_2(u)+b\,v).
\end{equation}
\item[iv)] If $G=G_{14}$, then the orbit space
is ${\mathcal B}=\{(\xi_1,\xi_2)\in\r^2\;:\xi_1\geq 2\}$ and a loxodrome can be parametrized by
\begin{equation}\label{caso14}
v(u)=\int_{u_0}^u  
\frac{-4 b\, \sqrt{\xi_1^2(t) - 4 - \xi_1'(t)^2}\; \pm \;2 (\xi_1^2(t) - 4)\; \cot\vartheta_0}{ (\xi_1^2(t) - 4)\, \sqrt{\xi_1^2(t) - 4\,(b^2-1)}}
 \,dt
\end{equation}
and
\begin{equation}\label{caso14-b}
\psi(u,v) = \Big(\frac{\sqrt{\xi_1^2(u) -4}\,\sin v}{\sqrt{\xi_1^2(u) -4}\,\cos v -\xi_1(u)},\frac{2 }{\xi_1(u)-\sqrt{\xi_1^2(u) -4}\,\cos v},\xi_2(u) + b\, v\Big).
\end{equation}
\end{itemize}
\end{theorem}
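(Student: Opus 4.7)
My plan is to treat each of the four cases by applying the general formula \eqref{eq-integral}. For each group $G$ I need to identify four ingredients: a complete set $\{\xi_1,\xi_2\}$ of $G$-invariant functions parametrizing the orbit space, an explicit lift $\gamma(u)$ of the profile curve, the volume function $\omega(u)=\|X(\gamma(u))\|_g$, and the mixed coefficient $F(u)=g(\psi_u,\psi_v)$. The flow $\phi_v$ can be read off directly from \eqref{eq:orbitsg24}--\eqref{eq:orbitsg14}, and then $\psi(u,v)=\phi_v(\gamma(u))$ must be checked to match the asserted parametrizations \eqref{caso24-b}, \eqref{caso34-b}, \eqref{caso14-b}. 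The key technical tool is the arc-length condition on $\tilde\gamma$, obtained from the quotient metric $\tilde g=h^{ij}\,d\xi_i\,d\xi_j$ with $h_{ij}=g(\nabla\xi_i,\nabla\xi_j)$; this is what lets me convert one of $\xi_1',\xi_2'$ into the square-root expressions appearing in the final formulas.

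Concretely, for case (i) I take $\xi_1=bx-z$ and $\xi_2=y$, lift $\gamma(u)=(0,\xi_2(u),-\xi_1(u))$, and compute $\omega^2=(1+b^2\xi_2^2)/\xi_2^2$ and $F=-b\,\xi_1'$ directly from the metric \eqref{eq:metric-h2xr}; the arc-length relation $\xi_1'^2/(1+b^2\xi_2^2)+\xi_2'^2/\xi_2^2=1$ then rewrites $|\xi_1'|\,\xi_2/\sqrt{1+b^2\xi_2^2}$ as $\sqrt{\xi_2^2-\xi_2'^2}$, producing \eqref{caso24}; the subcase $b=0$ follows by specialization. Case (ii), with $X=\partial_z$, is immediate: $F=0$, $\omega=1$, and \eqref{eq-integral} integrates directly to $\pm\cot\vartheta_0\,(u-u_0)$. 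Case (iii) mirrors (i), with polar invariants $\xi_1=\arctan(y/x)\in(0,\pi)$ and $\xi_2=z-b\log\sqrt{x^2+y^2}$, lift $\gamma(u)=(\cos\xi_1,\sin\xi_1,\xi_2)$, $\omega^2=(1+b^2\sin^2\xi_1)/\sin^2\xi_1$, and $F=b\,\xi_2'$; the corresponding arc-length identity turns $\xi_2'$ into $\sqrt{\sin^2\xi_1-\xi_1'^2}$ and yields \eqref{caso34}.

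The main obstacle is case (iv), because the flow of $X_1+bX_4$ is a M\"obius-type transformation of the hyperbolic factor, not a translation or a scaling. The key algebraic step is the identity $(x^2-y^2+1)^2+4x^2y^2=(x^2+y^2+1)^2-4y^2$, which gives $\|X_1\|^2=(\beta^2-4)/4$ with $\beta=(1+x^2+y^2)/y$; setting $\xi_1=\beta$ (forced into $[2,\infty)$ by $1+x^2+y^2\geq 2y$) then gives $\omega^2=(\xi_1^2-4)/4+b^2$. I select the lift $\gamma(u)=\bigl(0,(\xi_1(u)+\sqrt{\xi_1(u)^2-4})/2,\xi_2(u)\bigr)$, which sits on the slice $x=0$, and verify by direct substitution into \eqref{eq:orbitsg14} that $\phi_v(\gamma(u))$ simplifies to \eqref{caso14-b}. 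It then remains to compute $F=g(\psi_u,\psi_v)$ along this lift, to determine the quotient metric, and to use the corresponding arc-length equation to produce the factor $\sqrt{\xi_1^2-4-\xi_1'^2}$ appearing in \eqref{caso14}; the formula finally follows from \eqref{eq-integral}.
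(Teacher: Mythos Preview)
Your proposal is correct and follows essentially the same route as the paper's own proof: for each group you pick the same invariant functions, the same lift $\gamma$, compute $F$ and $\omega$ directly from the metric \eqref{eq:metric-h2xr}, and then feed the arc-length identity for $\tilde\gamma$ into the master formula \eqref{eq-integral}. The only cosmetic difference is that in case~(iv) you phrase the computation in Cartesian form (via the identity $(x^2-y^2+1)^2+4x^2y^2=(x^2+y^2+1)^2-4y^2$) whereas the paper passes to cylindrical coordinates first, but the lift, the value of $\omega^2=(\xi_1^2-4)/4+b^2$, and the resulting integrand are identical.
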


\begin{proof}
i) We begin with the calculations for the $G_{24}$-invariant surfaces. As $X=\frac{\partial}{\partial x}+b\frac{\partial}{\partial z}\;,\; b\in\r$, a set of two invariant functions is
$$\xi_1=b\,x-z,\qquad
\xi_2=y>0.$$
Thus, the orbit space is ${\mathcal B}=\{(\xi_1,\xi_2)\in\r^2\;:\;\xi_2>0\}$ and the
orbital metric is
$$
\tilde{g}= \frac{d\xi_1^2}{1+b^2\xi_2^2}+ \frac{d\xi_2^2}{\xi_2^2}.
$$
A lift of $\tilde{\gamma}$ with respect to $\pi$ is given by
$$
\gamma(u)=(0,\xi_2(u),-\xi_1(u))
$$
and, from \eqref{eq-psi} and \eqref{eq:orbitsg24}, the corresponding $G_{24}$-invariant surface is parametrized by
$$\psi(u,v)=(v,\xi_2(u),-\xi_1(u)+b\,v).$$
Then 
\begin{equation}\label{p24}F=-b\,\xi_1'(u),\qquad G=\omega^2=\frac{1}{\xi_2^2(u)}+b^2.
\end{equation}
Also, as $\tilde{\gamma}(u)$ is parametrized by arc length, it follows that
\begin{equation}\label{p124}
\xi_1'(u)=\omega(u)\,\sqrt{\xi_2^2(u)-\xi_2'(u)^2}.
\end{equation}
Then, by substituting \eqref{p24} and \eqref{p124} in equation \eqref{eq-integral}, we obtain \eqref{caso24}.\\

iii) We consider the case of $G_{34}$-invariant surfaces in $\h^2\times\r$,  where we consider a system of  cylindrical coordinates $(r,\theta,z)$. The orbit space is given by
$${\mathcal B}=\{(\xi_1,\xi_2)\in\r^2\,:\, \xi_1\in(0,\pi)\},$$ 
where $\{\xi_1=\theta,\xi_2=z-b\ln r\}$ are $G_{34}$-invariant functions.
As we have seen, endowing the orbit space with the quotient metric
$$
\tilde{g}=\frac{d\xi_1^2}{\sin^2\xi_1}+\frac{d\xi_2^2}{1+b^2\sin^2\xi_1},
$$
the projection
$$
(r,\theta,z)\xmapsto{\pi}(\theta,z-b\ln r)
$$
becomes a Riemannian submersion. Using cylindrical coordinates, a lift of $\tilde{\gamma}$ with respect to $\pi$ is given by
$$
\gamma(u)=(1,\xi_1(u),\xi_2(u))
$$
and, from \eqref{eq-psi} and \eqref{eq:orbitsg14} it follows that the corresponding invariant surface is given by
$$\psi(u,v)=(e^v\,\cos \xi_1(u),e^v\,\sin \xi_1(u),\xi_2(u)+b\,v).$$
Thus  we obtain that
\begin{equation}\label{p} F=b\,\xi_2'(u),\qquad G=\omega^2=\frac{1+b^2\,\sin^2\xi_1(u)}{\sin^2\xi_1(u)}.
\end{equation}
 Also, as $\tilde{\gamma}(u)$ is parametrized by arc length, we get
\begin{equation}\label{p1}\xi_2'(u)=\omega(u)\,\sqrt{\sin^2\xi_1(u)-\xi_1'(u)^2}.
\end{equation}
Substituting \eqref{p} and \eqref{p1} in  equation \eqref{eq-integral}, we obtain \eqref{caso34}.\\

iv) We consider the case of $G_{14}$-invariant surfaces in $\h^2\times\r$,  again with respect to a system of  cylindrical coordinates $(r,\theta,z)$. The orbit space is given by
$${\mathcal B}=\{(\xi_1,\xi_2)\in\r^2\,:\, \xi_2 \geq 2\},$$ where 
\[
\xi_1 =  \frac{r^2 +1}{r\sin \theta} ,\qquad
\xi_2 = z-b\arctan\Big(\frac{2 r\, \cos \theta}{r^2 -1}\Big), \qquad b \in \r,
\]
 are $G_{14}$-invariant functions.
As we have seen, endowing the orbit space with the quotient metric
$$
\tilde{g}=\frac{d\xi_1^2}{\xi_1^2 - 4}+\frac{\xi_1^2 -4}{\xi_1^2 + 4 (b^2 -1)} \, d\xi_2^2,
$$
the projection
$$
(r,\theta,z)\xmapsto{\pi}\Big(\frac{r^2+1}{r\, \sin \theta},z-b\,\arctan\Big(\frac{2 r\, \cos \theta}{r^2 -1}\Big)\Big)
$$
becomes a Riemannian submersion. By using cylindrical coordinates, a lift of $\tilde{\gamma}$ with respect to $\pi$ is given by:
\[
\gamma(u) = \Big(\frac{\xi_1(u) + \sqrt{\xi_1^2(u) -4}}{2},\frac{
\pi}{2}, \xi_2(u)\Big).
\]
The corresponding invariant surface is parametrized by
$$\psi(u,v) = \Big(\frac{\sqrt{\xi_1^2(u) -4}\,\sin v}{\sqrt{\xi_1^2(u) -4}\,\cos v -\xi_1(u)},\frac{2 }{\xi_1(u)-\sqrt{\xi_1^2(u) -4}\,\cos v},\xi_2(u) + b\, v\Big).
$$ 
Then we find that
\begin{equation}\label{p14}
G = \omega^2 = \frac{\xi^2(u) -4}{4} + b^2,  \qquad F = b + \xi_2'(u).
\end{equation}
Also, as $\tilde{\gamma}(u)$ is parametrized by arc length, we have
\begin{equation}\label{p114}
\xi_2'(u)= \frac{2\,\omega(u)}{\sqrt{\xi_1^2(u) -4}} \,  \sqrt{1 - \frac{\xi_1'(u)^2}{\xi_1^2(u)- 4}}.
\end{equation}
Now we use  \eqref{p14} and \eqref{p114} in~\eqref{eq-integral} to get~\eqref{caso14}.
\end{proof}

\begin{example}
We consider the minimal surface in $\h^2\times\r$ given by the graph of the function $z=-\ln y$ (see~\cite{Onnis}). This surface is $G_{2}$-invariant and its profile curve is given by
$$
\tilde{\gamma}(u)=(u/\sqrt{2}, e^{u/\sqrt{2}}).
$$
By using \eqref{caso24-b}, the surface can be parametrized by
$$
\psi(u,v)=(v,e^{u/\sqrt{2}},-u/\sqrt{2}).
$$
Therefore from \eqref{caso24} it comes out that the loxodromes which are not orbits are given by:
$$
\beta(u)=\psi(u,\sqrt{2}\cot\vartheta_0\,e^{u/\sqrt{2}}).
$$
\begin{figure}[h!]
{\includegraphics[width=0.7\textwidth]{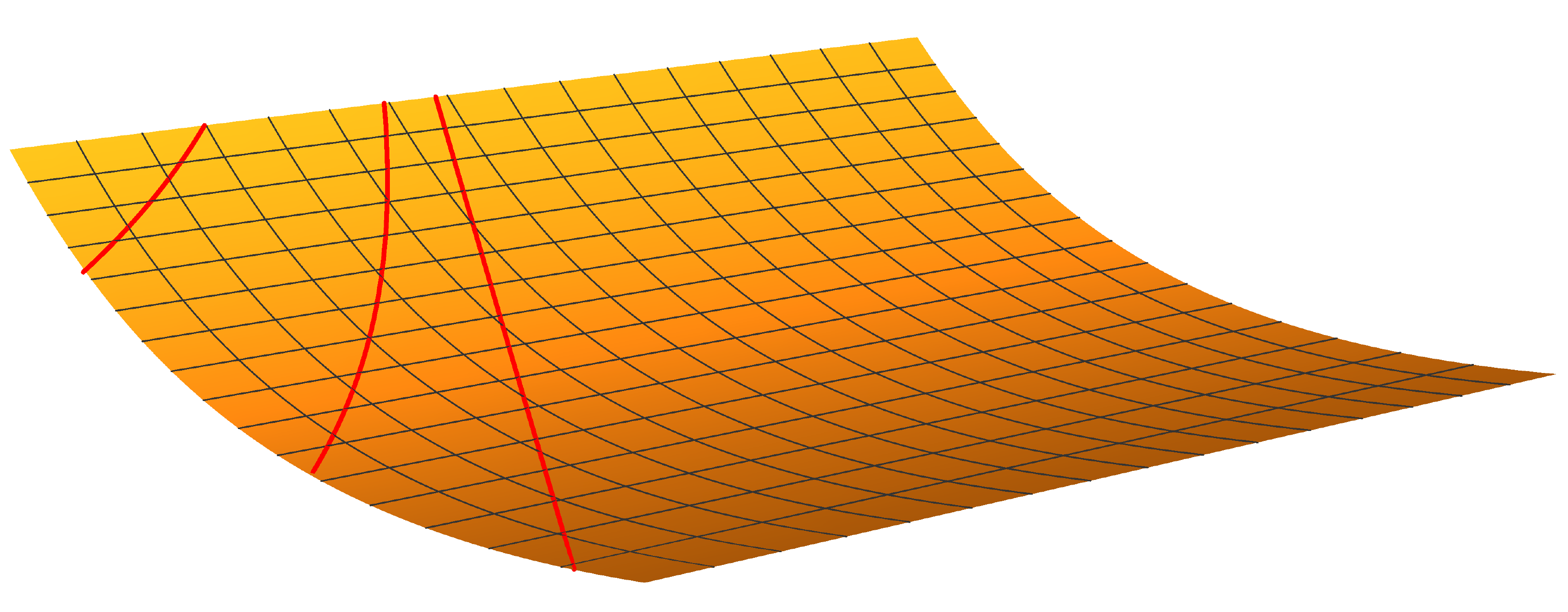}}
\begin{pspicture}(-5,0)(5,0.1)
\psset{xunit=1cm,yunit=1cm,linewidth=.03}
%\psaxes[labels=none,ticks=none]{->}(0,0)(-4,-0.2)(4,3)
\put(-5.6,4.3){$\vartheta_0=\pi/8$}
\put(-3.8,4.4){$\vartheta_0=\pi/4$}
\put(-2.2,0.2){$\vartheta_0=\pi/3$}
%\psdots[dotsize=0.15](-0.19,6.0)
\end{pspicture}
\caption{Three loxodromes of the $G_2$-invariant minimal graph in $\h^2\times\r$ given by $z=-\ln y$.}
\end{figure}
\end{example}

\begin{example}[The funnel surface]\label{ex:funnel}
The {\em funnel surface} is a complete minimal surface in $\h^2\times\r$ and it is the graph of the function $z=\ln(\sqrt{x^2+y^2})$ (see \cite{Onnis}). This surface is $G_{34}$-invariant and it can be obtained  starting from the simplest curve in $\h^2\times\r/G_{34}$, choosing $b = 1$ and $\xi_2=0$, whose  a parametrization by arc length is
$
\tilde{\gamma}(u)=(2 \arccot e^{-u},0).
$
Using cylindrical coordinates, a lift of $\tilde{\gamma}$ with respect to $\pi$ is given by:
$$
\gamma(u)=(1,2 \arccot e^{-u},0)
$$
and, by means of  \eqref{caso34-b}, the funnel surface can be parametrized by
$$
\psi(u,v)=(-e^v \tanh u, e^v \sech u,v).
$$
Taking \eqref{caso34} into account, we find that the loxodromes which are not orbits can be parametrized by
$$
\beta(u)=\psi\left(u,\int_{u_0}^u \frac{\pm \,\cot\vartheta_0}{\sqrt{1+\cosh^2 t}}\,dt\right).
$$
\begin{figure}[h!]
{\includegraphics[width=0.66\textwidth]{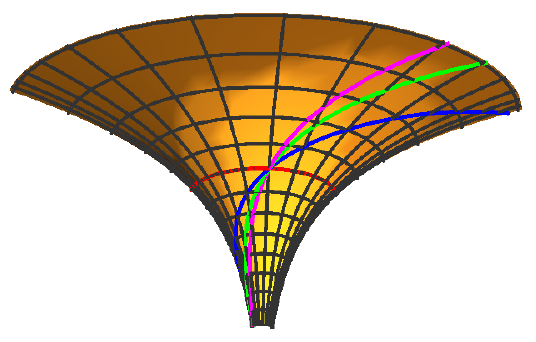}}
\begin{pspicture}(-5,0)(5,0.1)
\psset{xunit=1cm,yunit=1cm,linewidth=.03}
%\psaxes[labels=none,ticks=none]{->}(0,0)(-4,-0.2)(4,3)
\put(3.5,6.7){$\vartheta_0=\pi/8$}
\put(4.7,6.1){$\vartheta_0=\pi/6$}
\put(5.4,5.2){$\vartheta_0=\pi/4$}
\put(1.7,3.3){$\vartheta_0=\pi/2$}
%\psdots[dotsize=0.15](-0.19,6.0)
\end{pspicture}
\caption{Four loxodromes of the funnel surface through the point $(0,1,0)$ as seen from the point $(1,10,-4)$.}
\label{fig-funnel}
\end{figure}
\end{example}

\section{Loxodromes on invariant surfaces of the Heisenberg group $\h_3$}

We consider on the three-dimensional Heisenberg space $\h_3$, represented in $Gl_3(\r)$ by
$$
\left[
\begin{array}{ccc}
1&x&z+\frac{1}{2}x y\\ 0&1&y\\ 0&0&1
\end{array}
\right],\qquad x,y,z\in\r,
$$
 the left-invariant metric
$$g=dx^2+dy^2+\Big(\frac{1}{2}y\,dx-\frac{1}{2}x\,dy+dz\Big)^2.$$
The
isometry group of $(\h_3,g)$ has dimension $4$, which is the maximal
one for a non constant curvature three-manifold. In this case we
have the following
\begin{proposition}
The Lie algebra of the infinitesimal isometries of 
$(\h_3,g)$ admits the following basis of Killing vector fields
\begin{align*}
X_1&=\frac{\partial}{\partial
x}+\frac{y}{2}\frac{\partial}{\partial
z},\\
X_2&=\frac{\partial}{\partial
y}-\frac{x}{2}\frac{\partial}{\partial
z},\\
X_3&=\frac{\partial}{\partial z},\\
X_4&=-y\frac{\partial}{\partial x}+x\frac{\partial}{\partial y}.
\end{align*}
\end{proposition}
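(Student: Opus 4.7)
The strategy is to exhibit four linearly independent Killing vector fields matching the stated expressions; since the sentence preceding the proposition already records that the isometry group of $(\h_3,g)$ has dimension $4$, the Lie algebra of infinitesimal isometries has dimension $4$, so any such quadruple is automatically a basis. The work thus splits into (a) verifying that each $X_i$ is Killing, and (b) checking linear independence over $\r$.

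For $X_1, X_2, X_3$ I would avoid a direct Lie-derivative computation by recognizing them as the right-invariant vector fields generating the one-parameter subgroups of left translations, which are isometries because $g$ is left-invariant by construction. Multiplying the two matrices in the statement gives the group law
\[
(x,y,z)\cdot(x',y',z') = \Big(x+x',\; y+y',\; z+z' + \tfrac{1}{2}(xy' - x'y)\Big),
\]
and differentiating the subgroups $L_{(t,0,0)}$, $L_{(0,t,0)}$, $L_{(0,0,t)}$ at $t=0$ produces exactly $X_1$, $X_2$, $X_3$. For $X_4$ the left-invariance trick does not apply directly, so I would verify it by hand: its flow
\[
\phi_t(x,y,z) = (x\cos t - y\sin t,\; x\sin t + y\cos t,\; z)
\]
is an $SO(2)$-rotation in the $xy$-plane fixing $z$, under which $dx^2+dy^2$ is trivially invariant, and a short expansion of $\phi_t^{*}dx$, $\phi_t^{*}dy$, $\phi_t^{*}dz$ combined with $\phi_t^{*}x$, $\phi_t^{*}y$ shows that $\phi_t^{*}\bigl(\tfrac{y}{2}dx - \tfrac{x}{2}dy + dz\bigr) = \tfrac{y}{2}dx - \tfrac{x}{2}dy + dz$. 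This direct check is the one place where a small calculation is unavoidable; the key observation is that the mixed $x y \sin t\cos t$ terms cancel in pairs.

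Finally, to establish linear independence, suppose $aX_1 + bX_2 + cX_3 + dX_4 = 0$ for constants $a,b,c,d\in\r$. Reading the coefficient of $\partial/\partial x$ gives the polynomial identity $a - dy \equiv 0$, hence $a=d=0$; the coefficient of $\partial/\partial y$ then gives $b=0$, and the coefficient of $\partial/\partial z$ gives $c=0$. Together with the a priori dimension count, this shows that $\{X_1, X_2, X_3, X_4\}$ is a basis of the Killing Lie algebra, completing the proof.
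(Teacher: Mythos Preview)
Your argument is correct. The group law you derive from the matrix representation is right, the identification of $X_1,X_2,X_3$ as infinitesimal generators of left translations (hence Killing for a left-invariant metric) is valid, your pullback computation for the $SO(2)$-rotation $\phi_t$ checks out line by line, and the linear-independence argument is clean. Combined with the stated dimension $4$ of the isometry group, this does give a basis.

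As for comparison with the paper: the paper gives no proof of this proposition. It is recorded as a known structural fact about $(\h_3,g)$, in the same spirit as the subsequent Proposition~\ref{lem-red1}, which is attributed to \cite{FIMEPE}. So your write-up supplies precisely the verification the paper chose to omit. If anything, one could shorten it further by noting that the left-invariant coframe $\{dx,\,dy,\,\tfrac{1}{2}y\,dx-\tfrac{1}{2}x\,dy+dz\}$ is manifestly $SO(2)$-equivariant (the third form is $dz$ plus $\tfrac12$ times the area form $y\,dx-x\,dy$), which is exactly the observation behind your ``mixed terms cancel in pairs'' remark.
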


Also,  the one-dimensional subgroups of the isometry group
$Isom(\h_3,g)$ belong to one of the two following families (see \cite{FIMEPE}):
\begin{itemize}
  \item [1)] the one-parameter subgroups generated by linear
  combinations
  $$
  a_1X_1+a_2X_2+a_3X_3+bX_4,
  $$
 with $b\neq 0$, that are called subgroups of {\it
  helicoidal type}. If $a_i=0$, for $i\in \{1,2,3\}$,
   we obtain the group $SO(2)$ generated by $X_4$;
  \item [2)] the one-parameter subgroups generated by linear
  combinations of $X_1$, $X_2$ and $X_3$, that are called of {\it translational type}.
\end{itemize}

Therefore, a surface in the space $(\h_3,g)$ is called {\it
helicoidal} (respectively, {\it translational}) if it's invariant under the
action of a helicoidal (respectively, a translational) one-parameter subgroup
of isometries. In  \cite{FIMEPE} one finds the following 

\begin{proposition}\label{lem-red1}
A surface in $(\h_3,g)$ which is invariant under the action of a
one-parameter subgroup of isometries $G_X$ generated by a Killing
vector field $X=\sum_{i}a_i \,X_i$, $a_i\in\r$,  is congruent to a
surface invariant under the action of one of the following subgroups:
$$
G_1,\qquad G_3,\qquad G_{43}.
$$
\end{proposition}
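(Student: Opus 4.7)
My plan is to mirror the strategy of Proposition~\ref{lem-red}: two $G_X$-invariant surfaces are congruent whenever there exist an isometry $\Phi$ of $\h_3$ and a scalar $c\neq 0$ such that $\Phi_*X=c\,X'$, because in that case $G_{X'}=\Phi\,G_X\,\Phi^{-1}$. So the task is to classify the Killing fields $X=a_1X_1+a_2X_2+a_3X_3+bX_4$ up to rescaling and conjugation by the isometry group of $\h_3$, and to exhibit representatives in $G_1$, $G_3$, $G_{43}$. The essential ingredients are the brackets, which I would verify by a direct coordinate computation: $[X_1,X_2]=-X_3$, $[X_4,X_1]=-X_2$, $[X_4,X_2]=X_1$, with $X_3$ central. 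The translational case $b=0$ and the helicoidal case $b\neq 0$ do not mix under the conjugations I will use, so I treat them separately.

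Consider first the translational case $b=0$. If $(a_1,a_2)=(0,0)$ then $X$ is a nonzero multiple of $X_3$ and $G_X=G_3$. Otherwise, the flow of $X_4$ (rotation about the $z$-axis) acts on $\mathrm{span}\{X_1,X_2\}$ as an $SO(2)$-rotation and fixes $X_3$, so by rotating I may reduce to $a_2=0$. Then, using $[X_2,X_1]=X_3$ and the centrality of $X_3$, the flow $e^{t\,\operatorname{ad}(X_2)}$ sends $a_1X_1+a_3X_3$ to $a_1X_1+(a_3+ta_1)X_3$, so the choice $t=-a_3/a_1$ eliminates the central term. A rescaling then produces $X_1$, so $G_X$ is congruent to $G_1$.

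Now consider the helicoidal case. After rescaling I take $b=1$ and set $Y=a_2X_1-a_1X_2$. A short BCH-type computation with the brackets above gives $[Y,X_4]=a_1X_1+a_2X_2$ and $[Y,[Y,X_4]]=-(a_1^2+a_2^2)X_3$, with all higher brackets vanishing, so
\[
e^{\operatorname{ad}(Y)}\bigl(X_4+pX_3\bigr)=X_4+a_1X_1+a_2X_2+\bigl(p-\tfrac{1}{2}(a_1^2+a_2^2)\bigr)X_3,
\]
which equals $X$ for the choice $p=a_3+\tfrac{1}{2}(a_1^2+a_2^2)$. Hence $X$ is conjugate to $X_4+pX_3$, so $G_X$ is congruent to $G_{43}$. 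The main obstacle is to check that $e^{\operatorname{ad}(Y)}$ genuinely agrees with pushforward by an isometry: this holds because $Y$ is itself a Killing field, being an $\r$-linear combination of $X_1$ and $X_2$, so its flow consists of isometries; geometrically, this flow is precisely the left translation in $\h_3$ that carries the vertical axis of the helicoidal motion generated by $X$ (the line $\{(x,y)=(-a_2,a_1)\}$, on which the horizontal part of $X$ vanishes) onto the $z$-axis, which makes the reduction geometrically transparent and matches the dynamics described for $G_{14}$ in the proof of Proposition~\ref{lem-red}.
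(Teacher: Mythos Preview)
The paper does not give its own proof of Proposition~\ref{lem-red1}; it simply records the result and attributes it to \cite{FIMEPE}. Your argument supplies a correct self-contained proof by classifying the Killing fields of $(\h_3,g)$ up to rescaling and conjugation by isometries in the identity component of the isometry group. The brackets $[X_1,X_2]=-X_3$, $[X_4,X_1]=-X_2$, $[X_4,X_2]=X_1$ and the centrality of $X_3$ are all correct; the reduction in the translational case via the flow of $X_4$ (rotation in $\mathrm{span}\{X_1,X_2\}$) followed by the flow of $X_2$ (shear killing the central part) is clean; and the BCH computation in the helicoidal case is accurate, with $\Phi=\phi_{-1}^{\,Y}$ realising $e^{\operatorname{ad}(Y)}$ as a genuine isometric pushforward since $Y$ is itself Killing.

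One cosmetic remark: your closing reference to ``the proof of Proposition~\ref{lem-red}'' is misplaced, since that proposition is likewise only cited (from \cite{Onnis}) and not proved in the paper. Your argument does not rely on it in any case.
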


\begin{theorem}\label{teo-h3}
Let ${M}$ be a $G_X$-invariant surface of the Heisenberg group $(\h_3,g)$, and
let $\tilde{\gamma}(u)=(\xi_1(u),\xi_2(u))$ be its profile curve
in the regular part of the orbit space $({\mathcal B}=\h_3/G_X,\tilde{g})$,
which is parametrized by the invariant
functions $\xi_1$ and $\xi_2$. With respect to the local para\-me\-trization
$\psi(u,v)$ given by~\eqref{eq-psi}, we have:
\begin{itemize}
\item[i)] if $G=G_{1}$, then the orbit space is ${\mathcal B}=\r^2$ 
and a loxodrome can be parametrized by $\psi(u,v(u))$, where
\begin{equation}\label{ca1}
v(u)=\int_{u_0}^u \frac{\xi_1(t)\sqrt{1-\xi_1'(t)^2}\pm\cot\vartheta_0}{\sqrt{1+\xi_1(t)^2}}\,dt
\end{equation}
and
\begin{equation}\label{ca1-b}
\psi(u,v)=\Big(v,\xi_1(u),\frac{\xi_1(u)}{2}v-\xi_2(u)\Big).
\end{equation}

\item[ii)] if $G=G_{3}$, then the orbit space is ${\mathcal B}=\r^2$
and a loxodrome can be parametrized by $\psi(u,v(u))$, where
\begin{equation}\label{ca3}
v(u)=- \frac{\xi_1(u)}{2} \,\int_{u_0}^u  \sqrt{1-\xi_1'(t)^2 \,dt} + \int_{u_0}^u \xi_1(t)\, \sqrt{1-\xi_1'(t)^2} \,dt  \pm\cot\vartheta_0 \,(u- u_0)
\end{equation}
and
\begin{equation}\label{ca3-b}
\psi(u,v)=(\xi_1(u),\xi_2(u),v).
\end{equation} 
These curves are general helices with axis $X_3$.  

\item[iii)] If $G=G_{43}$, then the orbit space is ${\mathcal B}=\{(\xi_1,\xi_2)\in\r^2\;:\;\xi_1\geq 0\}$ 
and a loxodrome can be parametrized by $\psi(u,v(u))$, where
\begin{equation}\label{ca-heli}
v(u)=\int_{u_0}^u \frac{(\xi_1(t)^2-2 a)\,\sqrt{1-\xi_1'(t)^2}\pm2\cot\vartheta_0\,\xi_1(t)}{\xi_1(t)\sqrt{4 \xi_1(t)^2+(\xi_1(t)^2-2 a)^2}}\,dt
\end{equation}
and
\begin{equation}\label{heli}
\psi(u,v)=(\xi_1(u)\,\cos v,\xi_1(u)\,\sin v,\xi_2(u)+a\,v).
\end{equation}
\end{itemize}
\end{theorem}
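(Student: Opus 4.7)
The plan follows the same template used for Theorem~\ref{teo-h2xr}. For each of the subgroups $G_1, G_3, G_{43}$ I would execute five steps: (1) write out the infinitesimal generator $X$ and integrate its flow $\phi_v$; (2) find two algebraically independent $X$-invariants $\xi_1,\xi_2$ to coordinatize the regular part of the orbit space $\h_3/G$; (3) pick a lift $\gamma(u)$ of $\tilde\gamma$ so that $\psi(u,v) = \phi_v(\gamma(u))$ gives the claimed surface parametrization; (4) compute $F = g(\psi_u,\psi_v)$ and $\omega^2 = g(X,X)|_{\gamma(u)}$ directly from the Heisenberg metric; (5) use the fact that $\tilde\gamma$ is $\tilde g$-unit-speed to express $\xi_2'$ in terms of $\xi_1'$ and $\omega$, then substitute into~\eqref{eq-integral}.

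For $G_1$ (generated by $X_1 = \partial_x + (y/2)\partial_z$), the flow is $(x,y,z)\mapsto(x+v,y,z+vy/2)$, so the natural invariants are $\xi_1 = y$ and $\xi_2 = z - xy/2$ with orbit space $\r^2$. The lift $\gamma(u) = (0,\xi_1(u),-\xi_2(u))$ produces exactly~\eqref{ca1-b}. A short computation gives $\omega^2 = 1 + \xi_1^2$ and $F$ proportional to $\xi_2'$; the quotient metric on the $(\xi_1,\xi_2)$-plane is diagonal, and inverting the arc-length relation yields $\xi_2' = \omega\sqrt{1-\xi_1'^2}$, which plugged into~\eqref{eq-integral} collapses to~\eqref{ca1}.

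For $G_3$ (generated by $X_3 = \partial_z$), work in the $g$-orthonormal frame $E_1 = \partial_x - (y/2)\partial_z$, $E_2 = \partial_y + (x/2)\partial_z$, $E_3 = \partial_z$. The invariants are $\xi_1 = x, \xi_2 = y$, and $\nabla\xi_1 = E_1, \nabla\xi_2 = E_2$, so the quotient metric is the flat $d\xi_1^2 + d\xi_2^2$ and $\omega \equiv 1$. Parametrization~\eqref{ca3-b} is immediate, the cross term is $F = (\xi_2\xi_1' - \xi_1\xi_2')/2$, and the arc-length relation becomes $\xi_1'^2 + \xi_2'^2 = 1$; substituting into~\eqref{eq-integral} with $\xi_2' = \sqrt{1-\xi_1'^2}$ and applying integration by parts to $\int\xi_2\xi_1'\,dt$ delivers~\eqref{ca3}. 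Because $\omega$ is constant the angle condition with $X_3$ forces a constant $z$-velocity, identifying these curves as general helices with axis $X_3$.

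For $G_{43}$ (generated by $X = X_4 + a X_3$), cylindrical coordinates convert $X$ to $\partial_\theta + a\partial_z$, whose flow is $(r,\theta,z)\mapsto(r,\theta+v,z+av)$; the invariants are $\xi_1 = r$ and $\xi_2 = z - a\theta$. The lift $\gamma(u) = (\xi_1(u),0,\xi_2(u))$ yields~\eqref{heli} after applying $\phi_v$. Evaluating along $\gamma$ one finds
\[
\omega^2 = \xi_1^2 + \Big(a - \frac{\xi_1^2}{2}\Big)^2 = \frac{1}{4}\bigl[4\xi_1^2 + (\xi_1^2 - 2a)^2\bigr],
\]
which is exactly the square-root term in the denominator of~\eqref{ca-heli}, and $F = (a - \xi_1^2/2)\,\xi_2'$. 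Computing $\nabla\xi_1,\nabla\xi_2$ in the orthonormal frame on $\gamma(u)$ produces the quotient metric matrix; inverting it gives $\xi_2' = (\omega/\xi_1)\sqrt{1-\xi_1'^2}$, and substitution into~\eqref{eq-integral} simplifies algebraically to~\eqref{ca-heli}. The most delicate step throughout is the handling of the Heisenberg contact one-form $(y\,dx - x\,dy)/2 + dz$ inside both $F$ and $\omega^2$: mixed $dx\otimes dx$, $dy\otimes dy$ terms cancel in the rotational case but the contact term survives in a nontrivial way in both $F$ and $\omega^2$ for $G_{43}$, and careful sign tracking when extracting $\xi_2'$ from the arc-length relation is what matches the final formula.
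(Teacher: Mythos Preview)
Your proposal is correct and follows essentially the same approach as the paper: for each of $G_1$, $G_3$, $G_{43}$ the paper likewise identifies the invariants $(\xi_1,\xi_2)$ and the quotient metric, takes the same lifts $\gamma$ to obtain the parametrizations~\eqref{ca1-b}, \eqref{ca3-b}, \eqref{heli}, computes the same $F$ and $\omega^2$, uses the arc-length relation to express $\xi_2'$ in terms of $\xi_1'$, and substitutes into~\eqref{eq-integral} (with the same integration-by-parts step in case~ii)). The only discrepancy is a sign slip in your stated invariant for $G_1$ (the paper uses $\xi_2 = xy/2 - z$, consistent with the lift $\gamma(u)=(0,\xi_1(u),-\xi_2(u))$ you wrote), which does not affect the argument.
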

\begin{proof}
i) We start considering the case of $G_{1}$-invariant surfaces. As $$\xi_1=y,\qquad \xi_2=\frac{xy}{2}-z$$ are $X_1$-invariant functions, then the orbit space is given by
${\mathcal B}=\r^2,$  equipped with the metric
$$
\tilde{g}=d\xi_1^2+\frac{d\xi_2^2}{1+\xi_1^2}.
$$
Taking the lift of $\tilde{\gamma}$ given by
$$
\gamma(u)=(0,\xi_1(u),-\xi_2(u)),
$$
the corresponding invariant surface is parametrized as in \eqref{ca1-b}.
Thus, we have that
\begin{equation}\label{ph} F=-\xi_1(u)\,\xi_2'(u),\qquad G=1+\xi_1(u)^2.
\end{equation}
 Also, as $\tilde{\gamma}(u)$ is parametrized by arc length, it turns out
\begin{equation}\label{p1h}\xi_2'(u)=\omega(u)\,\sqrt{1-\xi_1'(u)^2}.
\end{equation}
Finally, we substitute \eqref{ph} and \eqref{p1h} in \eqref{eq-integral} and we obtain \eqref{ca1}.\\

ii) If $G=G_3$, the orbit space is $(\r^2,\tilde{g})$, where 
$
\tilde{g}=du^2+dv^2.
$
Also, a $G_3$-invariant vertical cylinder can be parametrized by \eqref{ca3-b} and $$F=\frac{\xi_1'(u)\,\xi_2(u)-\xi_2'(u)\,\xi_1(u)}{2},\qquad G =1.$$
Moreover, as $\tilde{\gamma}(u)$ is parametrized by arc length, one gets
\begin{equation}\label{p2h}\xi_2'(u)=\sqrt{1-\xi_1'(u)^2}.
\end{equation}
Then we use  \eqref{p2h} to find
\begin{equation}\label{pht}
\begin{aligned}
\int_{u_0}^u  \frac{F}{\omega^2} \,dt &= \int_{u_0}^u F \,dt \\
&=\frac{1}{2} \xi_1(u)\,\xi_2(u)  -  \int_{u_0}^u \xi_1(ut)\,\xi_2'(ut) \,dt  \\
&=\frac{1}{2}\xi_1(u)\int_{u_0}^u \sqrt{1-\xi_1'(t)^2}\, dt   -  \int_{u_0}^u \xi_1(t) \sqrt{1-\xi_1'(t)^2}\, dt.
\end{aligned}
\end{equation}
Finally, substituting \eqref{pht}  in \eqref{eq-integral}, we obtain \eqref{ca3}.\\

iii) If $X=X_4+a\,X_3$ and we use cylindrical coordinates $(r,\theta,z)$, we have that $\{\xi_1=r,\xi_2=z-a\,\theta\}$ is a set of independent invariant functions. Therefore, on the regular part of the orbit space
$$\mathcal{B}_r=\{(\xi_1,\xi_2)\in\r^2\;:\;\xi_1>0\},$$ 
we take the orbital metric
$$
\tilde{g}=d\xi_1^2+\frac{4\, \xi_1^2\,d\xi_2^2}{4 \xi_1^2+(\xi_1^2-2 a)^2}. 
$$
Then, as $\tilde{\gamma}$ is parametrized by arc length (with respect to the metric $\tilde{g}$), we get
\begin{equation}\label{p1heli}\xi_2'(u)=\omega(u)\,\frac{\sqrt{1-\xi_1'(u)^2}}{\xi_1(u)}.
\end{equation}
Choosing $\tilde{\gamma}(u)=(\xi_1(u),0,\xi_2(u))$, the helicoidal surface can be parametrided by \eqref{heli} and thus 
\begin{equation}\label{pheli} F=\frac{\xi_2'(u)\,(2a-\xi_1(u)^2)}{2},\qquad G=\frac{4 \xi_1(u)^2+(\xi_1(u)^2-2 a)^2}{4}.
\end{equation}
Making use of  \eqref{p1heli} and \eqref{pheli} in \eqref{eq-integral}, we obtain \eqref{ca-heli}.
\end{proof}

\begin{example}[The helicoidal catenoid]
The {\em helicoidal catenoid} is a helicoidal minimal surface in the Heisenberg group (see \cite{FIMEPE}), that is obtained choosing  $a=1/2$ and the profile curve given by 
$$
\tilde{\gamma}(u)=(\sqrt{u^2+1},(u-\arccot u)/2).
$$
By using \eqref{heli}, this surface can be parametrized by
$$
\psi(u,v)=\Big(\sqrt{u^2+1}\,\cos v,\sqrt{u^2+1}\,\sin v,\frac{u+v-\arccot{u}}{2}\Big)
$$
and from \eqref{ca-heli} it follows that the loxodromes which are not orbits can be parametrized by
$$
v(u)=\sqrt{2}\, (1 \pm \cot{\vartheta_0})\,\arctan\Big(\frac{u}{\sqrt{2}}\Big)-\arctan{u}. 
$$
\begin{figure}[h!]
{\includegraphics[width=0.4\textwidth]{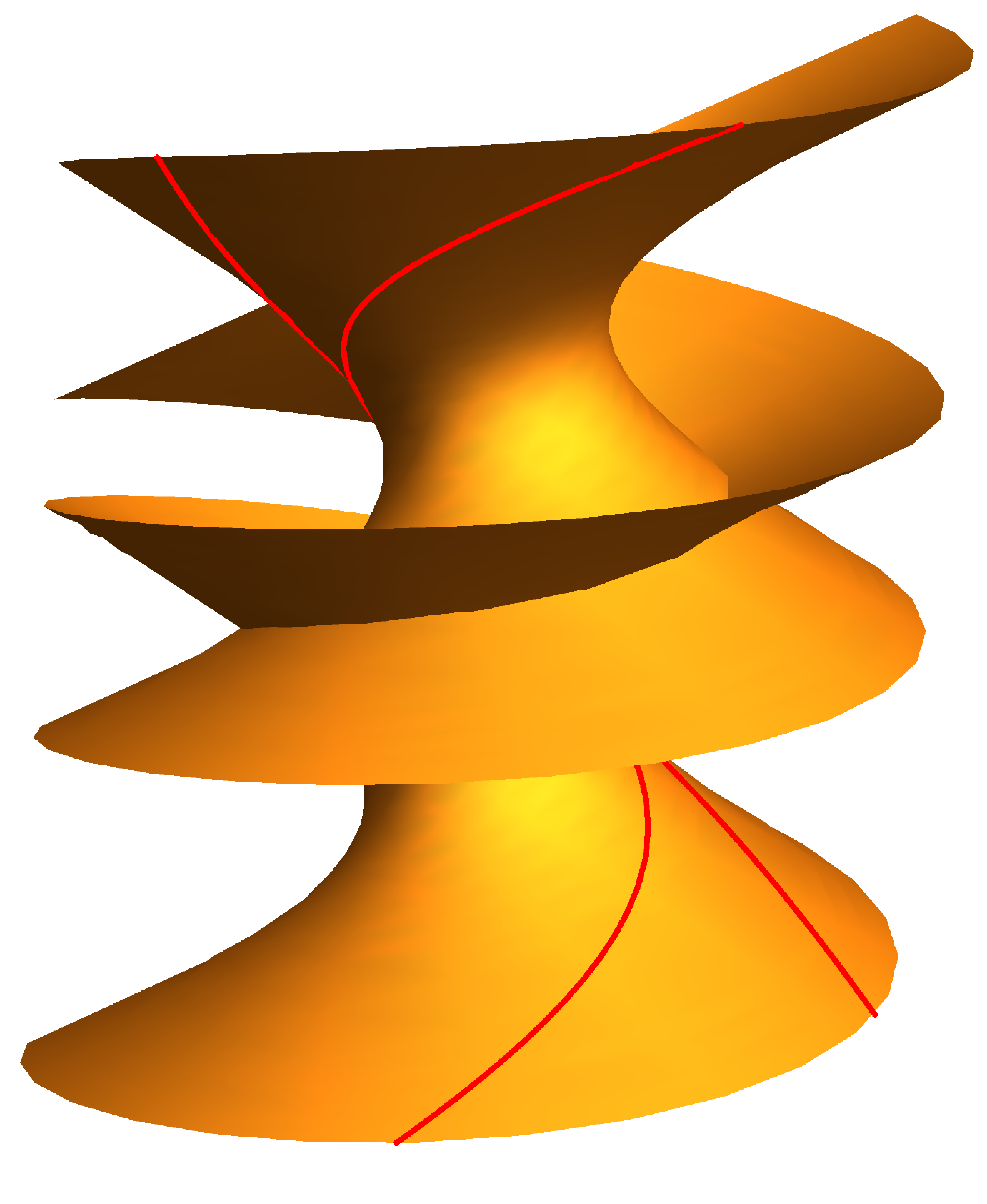}}
\begin{pspicture}(-5,0)(5,0.1)
\psset{xunit=1cm,yunit=1cm,linewidth=.03}
%\psaxes[labels=none,ticks=none]{->}(0,0)(-4,-0.2)(4,3)
\put(-1.3,0.3){$\vartheta_0=\pi/6$}
\put(2.1,1){$\vartheta_0=\pi/4$}
\end{pspicture}
\caption{Two loxodromes of the helicoidal catenoid in $\h_3$.}
\end{figure}
\end{example}

\section{Loxodromes on invariant surfaces with constant Gauss curvature}

In this section we consider the case of a $G_X$-invariant surface ${M}\subset ({N}^3,g)$ such that the induced metric is of constant Gauss curvature. For this case we shall restrict our investigation to the case when
the lift $\gamma$, used to construct the parametrization of the surface \eqref{eq-psi}, is horizontal.  With this
assumption, the equation~\eqref{eq-integral} can be integrated.

\begin{proposition}[Positive curvature] Let ${M}\subset ({N}^3,g)$ be a $G_X$-invariant surface of
constant positive Gauss curvature $K=1/R^2$, locally parametrized by
$\psi(u,v)$ given by \eqref{eq-psi} with $\gamma$ horizontal lift. Then a loxodrome on ${M}$ which is not an orbit can be parametrized by
\begin{equation}
    v(u)= \mp\frac{R\,\cot\vartheta_0}{\sqrt{a}}\,\arcsinh\Big(R\,\frac{{\omega}_u(u)}{\omega(u)}\Big)+b,\qquad a,b\in\r,\, a> 0.
\end{equation}
\end{proposition}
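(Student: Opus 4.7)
The plan is to exploit the horizontality of $\gamma$ to collapse \eqref{eq-integral} into a single quadrature in $1/\omega$, then use the constant-curvature ODE from Theorem~\ref{teo-main} to produce a first integral of $\omega$, and finally check the proposed formula by direct differentiation.

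First, since $\gamma$ is a horizontal lift of $\tilde{\gamma}$, the Remark following equation~\eqref{eq-E} gives $F\equiv 0$ and $E\equiv 1$. Therefore \eqref{eq-integral} reduces to
$$v(u)=\pm\int_{u_0}^{u}\frac{\cot\vartheta_0}{\omega(t)}\,dt,$$
so the task is reduced to exhibiting an explicit antiderivative of $1/\omega$. By Theorem~\ref{teo-main}, constant curvature $K=1/R^{2}$ forces $\omega_{uu}+\omega/R^{2}=0$; multiplying by $2\omega_u$ and integrating in $u$ yields the conservation law
$$\omega(u)^{2}+R^{2}\,\omega_u(u)^{2}=a,$$
for a positive constant $a$ (positivity follows from $\omega>0$ on the regular locus). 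This $a$ is precisely the constant appearing in the statement, and $b$ will be the integration constant.

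Second, I would verify the claim by differentiating the right-hand side. Set
$$V(u):=\mp\frac{R}{\sqrt{a}}\,\arcsinh\!\Big(\frac{R\,\omega_u(u)}{\omega(u)}\Big).$$
Using $\arcsinh'(x)=1/\sqrt{1+x^{2}}$ together with the identity
$$1+\frac{R^{2}\omega_u^{2}}{\omega^{2}}=\frac{\omega^{2}+R^{2}\omega_u^{2}}{\omega^{2}}=\frac{a}{\omega^{2}},$$
and the derivative
$$\Big(\frac{\omega_u}{\omega}\Big)'=\frac{\omega\,\omega_{uu}-\omega_u^{2}}{\omega^{2}}=-\frac{\omega^{2}/R^{2}+\omega_u^{2}}{\omega^{2}}=-\frac{a}{R^{2}\omega^{2}},$$
(which uses the ODE for $\omega$), a short chain-rule computation collapses everything to $V'(u)=\pm 1/\omega(u)$. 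Multiplying by $\cot\vartheta_{0}$ and adding the constant $b$ recovers the proposition. The only mildly delicate point is matching the two sign choices: the $\pm$ of \eqref{eq-integral} corresponds to the $\mp$ in the formula (via the oddness of $\arcsinh$ and the square roots), which is exactly the ambiguity stated. Apart from this sign bookkeeping, no genuine obstacle arises---the argument is purely a direct verification.
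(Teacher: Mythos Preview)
Your proof is correct and essentially mirrors the paper's: both reduce \eqref{eq-integral} to $\pm\cot\vartheta_0\int du/\omega$ via horizontality, derive the first integral $\omega^2+R^2\omega_u^2=a$ from \eqref{eq-main}, and identify $\eta=R\,\omega_u/\omega$ as the key quantity. The only cosmetic difference is that the paper performs the substitution $\eta$ and integrates forward to $\arcsinh\eta$, whereas you verify the same antiderivative by differentiating the proposed formula backward.
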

\begin{proof}
Firstly, as $K=1/R^2$, from \eqref{eq-main} we obtain 
\begin{equation}\label{erre}
     R^2\omega_{uu} (u)+\omega (u)=0.
\end{equation}
Also, from {\eqref{erre}} follows that
$$
\frac{d}{du}\left((\omega(u))^2+R^2\,(\omega_u(u))^2\right)=2\,\omega_u(u)\,(\omega(u)+R^2\,\omega_{uu}(u))=0,
$$
which implies that there exists a constant $a\in\r,\, a> 0,$ such that
\begin{equation}\label{bi}
(\omega(u))^2+R^2\,(\omega_u(u))^2=a.
\end{equation}
Combining \eqref{erre} and \eqref{bi}, we find
\begin{equation}\label{bi1}
   {\omega}_{uu}\,\omega-{\omega}_{u}^2=-\frac{a}{R^2}.
\end{equation}
Considering the new variable
 $$
 \eta(u)=R\,\frac{{\omega}_{u}(u)}{\omega(u)},
 $$
and  taking into account \eqref{bi1}, we get
\begin{equation}\label{dw}
    d\eta=-\frac{a}{R\,\omega^2}\,du ,
\end{equation}
and also, by means of \eqref{bi},
\begin{equation}\label{dww}
    \sqrt{1+\eta^2}=\frac{\sqrt{a}}{\omega}.
\end{equation}
Finally, integrating  \eqref{eq-integral} we have
\begin{equation}\label{eqvu}\begin{aligned}
    v(u)&=\pm\cot\vartheta_0\int \dfrac{1}{\omega}\,du=
   \mp \frac{R\,\cot\vartheta_0}{\sqrt{a}}\int\frac{d\eta}{\sqrt{1+\eta^2}}\\&=\mp\frac{R\,\cot\vartheta_0}{\sqrt{a}}\,\arcsinh \eta+b\\
    &=\mp\frac{R\,\cot\vartheta_0}{\sqrt{a}}\,\arcsinh\Big(\frac{R\,{\omega}_{u}(u)}{\omega(u)}\Big)+b,\quad b\in\r.
\end{aligned}
\end{equation}
\end{proof}

\begin{proposition}[Negative curvature] Let ${M}\subset ({N}^3,g)$ be a $G_X$-invariant surface of
constant negative Gauss curvature $K=-1/R^2$, locally parametrized by
$\psi(u,v)$  given by \eqref{eq-psi}, with $\gamma$ horizontal lift. Then a loxodrome on ${M}$ which is not an orbit can be parametrized by
\begin{equation*}
  \begin{array}{lll}
   \displaystyle{v(u)=\mp\frac{\cot\vartheta_0}{{\omega}_u(u)}}+b, & \text{if} & a=0,\\
   &&\\
   \displaystyle{ v(u)=\mp\frac{R\,\cot\vartheta_0}{\sqrt{-a}}\,\ln\Big(\frac{R\,\omega_u(u)+\sqrt{-a}}{\omega(u)}\Big)+b}, & \text{if} & a<0,\\
    &&\\
   \displaystyle{ v(u)=\pm\frac{R\,\cot\vartheta_0}{\sqrt{a}}\,\arcsin\Big(\frac{R\,{\omega}_{u}(u)}{\omega(u)}\Big)+b},& \text{if} & a>0,
  \end{array}
\end{equation*}
where $b\in\r$.
\end{proposition}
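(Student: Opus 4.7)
The approach mirrors the positive-curvature case, with the only new ingredient being a careful case split driven by the sign of an integration constant. Since $K=-1/R^{2}$ is constant, Theorem~\ref{teo-main} gives the ODE $R^{2}\omega_{uu}(u)-\omega(u)=0$. Multiplying by $\omega_{u}$ yields
\[
\frac{d}{du}\bigl(\omega^{2}-R^{2}\omega_{u}^{2}\bigr)=2\omega_{u}\bigl(\omega-R^{2}\omega_{uu}\bigr)=0,
\]
so there is a constant $a\in\mathbb{R}$ with $\omega^{2}-R^{2}\omega_{u}^{2}=a$. The three cases of the statement correspond precisely to the sign of $a$.

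Next, since $\gamma$ is a horizontal lift we have $F=0$ and $E=1$, so \eqref{eq-integral} collapses to
\[
v(u)=\pm\cot\vartheta_{0}\int_{u_{0}}^{u}\frac{dt}{\omega(t)}+b.
\]
For $a\neq0$, I would use the auxiliary variable $\eta(u)=R\,\omega_{u}(u)/\omega(u)$, already used in the positive-curvature case. A short computation using both the ODE and the conservation law gives
\[
\frac{d\eta}{du}=\frac{a}{R\,\omega^{2}},\qquad \omega^{2}(1-\eta^{2})=a,
\]
so after substitution
\[
\int\frac{du}{\omega}=\int\frac{R\,\omega}{a}\,d\eta.
\]

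The two non-trivial cases now follow by expressing $\omega$ in terms of $\eta$. If $a>0$, then $\omega=\sqrt{a}/\sqrt{1-\eta^{2}}$, so $\int du/\omega=(R/\sqrt{a})\int d\eta/\sqrt{1-\eta^{2}}=(R/\sqrt{a})\arcsin\eta$, and back-substituting $\eta=R\omega_{u}/\omega$ gives the third formula. If $a<0$, then $\omega=\sqrt{-a}/\sqrt{\eta^{2}-1}$, and a sign computation (noting $\sqrt{-a}/a=-1/\sqrt{-a}$) turns the integral into $-(R/\sqrt{-a})\int d\eta/\sqrt{\eta^{2}-1}$; using $\sqrt{\eta^{2}-1}=\sqrt{-a}/\omega$ to rewrite the standard logarithmic antiderivative yields the middle formula with argument $(R\omega_{u}+\sqrt{-a})/\omega$.

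Finally, the degenerate case $a=0$ must be handled separately because $\eta$ becomes degenerate ($\eta^{2}\equiv 1$). Here the conservation law reads $\omega=R|\omega_{u}|$, i.e.\ $\omega_{u}=\pm\omega/R$, whence $1/\omega=\pm R/(R\omega_{u}\cdot R)=\pm 1/(R\omega_{u})\cdot$; more cleanly, since $\omega_{u}$ is proportional to $\omega$ one can evaluate $\int du/\omega$ directly by changing variable to $\omega$: $du=\pm R\,d\omega/\omega$, so $\int du/\omega=\mp R/\omega+\mathrm{cst}=\mp 1/\omega_{u}+\mathrm{cst}$, reproducing the first formula. The main bookkeeping obstacle is getting the $\pm$ signs consistent across the three cases (the sign attached to $\cot\vartheta_{0}$ in \eqref{eq-integral}, the sign choices in $\arcsin/\arccosh$ branches, and the sign of $\sqrt{-a}/a$ for $a<0$); once these are tracked carefully, the three displayed formulas fall out directly.
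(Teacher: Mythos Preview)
Your proof is correct and follows essentially the same route as the paper: derive the conservation law $\omega^{2}-R^{2}\omega_{u}^{2}=a$ from the ODE, use the substitution $\eta=R\,\omega_{u}/\omega$ to reduce $\int du/\omega$ to an elementary integral in $\eta$ for $a\neq 0$, and treat $a=0$ separately. The only cosmetic difference is in the $a=0$ case, where the paper writes the integrand as an exact derivative $1/\omega=\pm\,d/du(1/\omega_{u})$ while you change variable to $\omega$; these are the same computation.
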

\begin{proof}
As $K=-1/R^2$,  equation \eqref{eq-main} becomes
\begin{equation*}\label{erre1}
     \omega(u)-R^2\omega_{uu} (u)=0.
\end{equation*}
This implies 
\begin{equation}\label{bi2}
\omega(u)^2-R^2\,\omega_u(u)^2=a,\quad a\in\r.
\end{equation}
These two latter conditions imply
\begin{equation}\label{bi3}
   {\omega}_{uu}(u)\,\omega(u)-{\omega}_u(u)^2=\frac{a}{R^2}.
\end{equation}
In this case, the constant $a$ can be any real number.
Performing changes of variables, similar to the case of constant positive curvature, equation \eqref{eq-integral} can be integrated.\\
When $a\neq 0$,  making the change 
$$
 \eta(u)=R\,\frac{{\omega}_{u}(u)}{\omega(u)}
 $$
and taking into account \eqref{bi3}, we get
$$
    d\eta=\frac{a}{R\,\omega^2(u)}\,du.
$$
$\bullet$ For $a<0$, from \eqref{bi2} we get
$$ \sqrt{\eta^2-1}=\frac{\sqrt{-a}}{\omega}$$ 
and then
$$\begin{aligned}
     v(u)&=\pm\cot\vartheta_0\int \dfrac{1}{\omega}\,du=
   \mp \frac{R\,\cot\vartheta_0}{\sqrt{-a}}\int\frac{d\eta}{\sqrt{\eta^2-1}}\\&=\mp\frac{R\,\cot\vartheta_0}{\sqrt{-a}}\,\ln (\eta+\sqrt{\eta^2-1})+b\\
    &=\mp\frac{R\,\cot\vartheta_0}{\sqrt{-a}}\,\ln\Big(\frac{R\,\omega_u(u)+\sqrt{-a}}{\omega(u)}\Big)+b,\qquad b\in\r.
\end{aligned}
$$

$\bullet$ If $a>0$, from \eqref{bi3} we obtain
$$ \sqrt{1-\eta^2}=\frac{\sqrt{a}}{\omega}.$$ 
Consequently,
$$\begin{aligned}
     v(u)&=\pm\cot\vartheta_0\int \dfrac{1}{\omega}\,du=
   \pm \frac{R\,\cot\vartheta_0}{\sqrt{a}}\int\frac{d\eta}{\sqrt{1-\eta^2}}\\&=\pm\frac{R\,\cot\vartheta_0}{\sqrt{a}}\,\arcsin \eta+b\\
    &=\pm\frac{R\,\cot\vartheta_0}{\sqrt{a}}\,\arcsin\Big(\frac{R\,{\omega}_{u}(u)}{\omega(u)}\Big)+b,\qquad b\in\r.
\end{aligned}
$$
$\bullet$  If $a=0$, the result follows by observing that, as $\omega>0$, from \eqref{bi2} we have that $\omega_u\neq 0$;  moreover from~\eqref{bi3} it follows
\[
\frac{1}{\omega(u)} = \frac{d}{du}\Big(\frac{1}{\omega(u)_u}\Big)\] .

\end{proof}

When the Gauss curvature is zero, one has  $\omega_u=a\in\r$ and  the following 

 \begin{proposition}[Flat case] Let ${M}\subset ({N}^3,g)$ be a flat $G_X$-invariant surface, locally parametrized by
$\psi(u,v)$ given by \eqref{eq-psi} with $\gamma$ horizontal lift. Then a loxodrome on ${M}$, which is not an orbit, can be parametrized by

\begin{equation*}
  \begin{array}{lll}
   \displaystyle{  v(u)=\pm\frac{\cot\vartheta_0}{a}\,\ln\omega(u)+b}, & \text{if} & a\neq 0,\\
   &&\\
   \displaystyle{ v(u)=\pm\frac{\cot\vartheta_0}{c}\,u+b}, & \text{if} & a=0,  \end{array}
\end{equation*}
where $b, c \in\r$.
\end{proposition}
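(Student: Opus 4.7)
The plan is to apply formula \eqref{eq-integral} under the two simplifying hypotheses at hand: horizontality of the lift $\gamma$ and flatness of the induced metric. These two conditions reduce the general integral for $v(u)$ to an elementary one that splits into the two cases of the statement according as the derivative $\omega_u$ vanishes or not.

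First, since $\gamma$ is a horizontal lift, the Remark following \eqref{eq-E} gives $F=0$ and $E=1$. Consequently \eqref{eq-integral} collapses to
\begin{equation*}
v(u)=\pm\cot\vartheta_0 \int_{u_0}^{u}\frac{dt}{\omega(t)}.
\end{equation*}
Next, I would invoke Theorem~\ref{teo-main}: since $M$ is flat, $K=0$ and equation \eqref{eq-main} becomes $\omega_{uu}(u)=0$. Integrating once yields $\omega_u(u)=a$ for some constant $a\in\r$, and integrating a second time gives $\omega(u)=a\,u+c$ for another constant $c\in\r$ (with $\omega>0$ on the relevant interval, to keep the metric nondegenerate).

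Now I split according to whether $a$ vanishes. If $a\neq 0$, then $\omega_u=a$ and $d(\ln\omega)/du=\omega_u/\omega=a/\omega$, so $1/\omega = (\ln\omega)_u/a$; integrating,
\begin{equation*}
v(u)=\pm\cot\vartheta_0\int_{u_0}^{u}\frac{dt}{\omega(t)}=\pm\frac{\cot\vartheta_0}{a}\,\ln\omega(u)+b,\qquad b\in\r,
\end{equation*}
where the constant absorbs $\mp(\cot\vartheta_0/a)\ln\omega(u_0)$. If instead $a=0$, then $\omega\equiv c$ is a positive constant and
\begin{equation*}
v(u)=\pm\cot\vartheta_0\int_{u_0}^{u}\frac{dt}{c}=\pm\frac{\cot\vartheta_0}{c}\,u+b,\qquad b\in\r,
\end{equation*}
again absorbing $\mp\cot\vartheta_0\,u_0/c$ into the constant $b$.

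There is really no significant obstacle here: the delicate content has already been packaged into Theorem~\ref{teo-main} (which turns flatness into $\omega_{uu}=0$) and into the Remark that horizontality forces $F=0$ and $E=1$. The only things to be careful about are keeping track of the two integration constants produced by $\omega_{uu}=0$ (which are precisely the parameters $a$ and, implicitly through $c$, the initial value of $\omega$ that appear in the statement), and verifying that the hypothesis $\omega>0$ makes $\ln\omega$ well-defined when $a\neq 0$ and makes $1/c$ meaningful when $a=0$.
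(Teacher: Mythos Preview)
Your proof is correct and follows exactly the approach implicit in the paper. The paper does not spell out a proof for this proposition; it merely notes, just before the statement, that when $K=0$ one has $\omega_u=a\in\r$, leaving the elementary integration of \eqref{eq-integral} (with $F=0$ by horizontality) to the reader, which is precisely what you have carried out.
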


\end{document}